 \newtheorem{theorem}{Theorem}[section]
 \newtheorem{corollary}[theorem]{Corollary}
 \newtheorem{lemma}[theorem]{Lemma}
\newtheorem{example}[theorem]{Example}
\theoremstyle{definition}
\theoremstyle{remark}
\newtheorem{fact*}{Fact}
\DeclareMathOperator{\IM}{Im}
\DeclareMathOperator{\dist}{dist}
\DeclareMathOperator\im{\mathrm {Im~}}
\newcommand\dd{\mathrm d}
\newcommand{\eps}{\varepsilon}
\newcommand{\hilbert}{\mathcal{H}}
\renewcommand{\T}{\mathbb{T}}
\renewcommand{\D}{\mathbb{D}}
\newcommand{\C}{\mathbb{C}}
\renewcommand{\R}{\mathbb{R}}
\newcommand{\cc}[1]{\overline{#1}}
\newcommand{\abs}[1]{\left\vert#1\right\vert}
\newcommand{\nt}{\stackrel{\mathrm {nt}}{\to}}
\newcommand{\ip}[2]{\left\langle #1, #2 \right\rangle}
\newcommand{\inv}{^{-1}}
\newcommand{\til}{\raise.17ex\hbox{$\scriptstyle\mathtt{\sim}$}}
\newcommand{\ph}{\varphi}
\newcommand\ep{\varepsilon}
\newcommand\la{\lambda}
\newcommand\beq{\begin{equation}}
\newcommand\eeq{\end{equation}}
\newcommand\black{\color{black}}
\newcommand{\bbm}{\left[ \begin{smallmatrix}}
\newcommand{\ebm}{\end{smallmatrix} \right]}
\newcommand{\bpm}{\left( \begin{smallmatrix}}
\newcommand{\epm}{\end{smallmatrix} \right)}
\numberwithin{equation}{section}
\newlength{\Mheight}
\newlength{\cwidth}
\newcommand{\dfn}[1]{{\bf #1}\index{#1}}
\renewcommand{\k}{\mathfrak{k}}
\newcommand\fkf{\mathfrak{F}_f^\mathfrak{k}}
\newcommand\afkf{A\mathfrak{F}_{f, \la}^{\mathfrak{k}, \tau}}
\newcommand{\mk}{\mathfrak{k}}
\newcommand{\gandalf}{\textarc{F}}
\newcommand{\enigma}{\textarc{m}}
\newcommand{\ethereal}{\rotatebox[origin=c]{180}{\enigma}}
\title[Escaping Nontangentiality II]{Averaged mixed Julia-Fatou type theory with applications to spectral foliation}
\author{
J. E. Pascoe$^\dagger$
}
\address{Department of Mathematics\\
1400 Stadium Rd\\
  University of Florida\\
 Gainesville, FL 32611}
\email[J. E. Pascoe]{pascoej@ufl.edu}
\thanks{$\dagger$ Partially supported by National Science Foundation Mathematical
Science Postdoctoral Research Fellowship  
DMS 1606260 and DMS Analysis Grant 1953963}
\author{
Ryan Tully-Doyle$^\ddagger$
}
\address{Mathematics Department \\
California Polytechnic State University\\
San Luis Obispo, CA 93407 }
\email[R. Tully-Doyle]{rtullydo@calpoly.edu}
\thanks{$\ddagger$ Partially supported by National Science Foundation DMS Analysis Grant 2055098 and University of New Haven SRG and Cal Poly startup}
\date{\today}
\subjclass[2010]{Primary 30E20, 30E25 Secondary 47A55, 47A10}
\begin{document}

\begin{abstract}
Classically, theorems of Fatou and Julia describe the boundary regularity of functions in one complex variable. The former says that a complex analytic function on the disk has non-tangential boundary values almost everywhere, and the latter describes when a function takes an extreme value at a boundary point and is differentiable there non-tangentially. We describe a class of intermediate theorems in terms of averaged Julia-Fatou quotients. Boundary regularity is related to integrability of certain quantities against a special measure, the so-called Nevanlinna measure. Applications are given to spectral theory. 
\end{abstract}

\maketitle

\section{Introduction}

{ The goal of this manuscript is to develop a theory relating the boundary behavior of a class of analytic functions $f$ on regions of either nontangential or controlled tangential approach to the boundary on the one hand, and on the other the regularity of $f$ and a corresponding measure $\mu$, with a special focus on those properties which factor through conformal maps that are continuous at the boundary.}

\subsection{The classical setting}

The classical approach to boundary questions involves sets that approach the boundary {\bf nontangentially}. We define a \dfn{Stolz region} in a domain $D$ at a point $\tau \in \partial D$ with aperture $M,$ denoted $S_{\tau,M}$, to be the set
		$$S_{\tau ,M} = \{z\in D|\dist (z, \partial D) \geq M \dist(z, \tau)\}.$$
	Note that for the Stolz region $S_{\tau,M}$ to be nonempty, we must have $M\leq 1.$
A set $S$ is \dfn{nontangential at $\tau$} if it can be contained in a Stolz region. In the classical regimes on the disk, a Stolz region is a sector of a circle with point at $\tau$.

Fatou's Theorem \cite{fatou1906} concerns the existence of nontangential limits at the boundary for functions analytic on the unit disk $\D$:
\begin{theorem}
 Let $f:\D \to \C$ be a bounded analytic function. Then $f$ has a nontangential limit at a.e. $\tau \in \T$.
\end{theorem}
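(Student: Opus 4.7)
The plan is to reduce Fatou's theorem to a Lebesgue-type boundary differentiation statement for Poisson integrals, combined with a weak-$\ast$ compactness argument to produce an $L^\infty(\T)$ boundary function.

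First, since $f$ is bounded and analytic on $\D$, the circular dilates $f_r(e^{i\theta}) := f(re^{i\theta})$ form a bounded family in $L^\infty(\T)$ as $r \to 1^-$. By Banach-Alaoglu there exist $r_n \to 1$ and $f^\ast \in L^\infty(\T)$ with $f_{r_n} \to f^\ast$ in the weak-$\ast$ topology. Since the real and imaginary parts of $f$ are harmonic, $f(\rho z) = \int_\T P(z, e^{i\theta}) f_\rho(e^{i\theta})\, \tfrac{d\theta}{2\pi}$ for $\rho < 1$ and fixed $z \in \D$; testing this identity against the continuous function $\theta \mapsto P(z, e^{i\theta})$ along $\rho = r_n$ yields the Poisson representation
\[
f(z) \;=\; \int_\T P(z, e^{i\theta})\, f^\ast(e^{i\theta})\, \tfrac{d\theta}{2\pi}.
\]

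Second, I would establish the general statement that the Poisson integral $P[g]$ of any $g \in L^1(\T)$ converges nontangentially to $g(\tau)$ at almost every $\tau \in \T$. The engine is the nontangential maximal function $N_M g(\tau) := \sup_{z \in S_{\tau, M}} |P[g](z)|$. One obtains a weak-type $(1,1)$ bound $|\{N_M g > \la\}| \lesssim_M \|g\|_1 / \la$ by pointwise majorizing $P(z, e^{i\theta})$ on a Stolz region by a constant (depending on $M$) times an averaged kernel centered at $\tau$, then applying the Vitali covering lemma to get the Hardy-Littlewood bound. Nontangential convergence is trivial for continuous $g$ (by uniform convergence of Poisson integrals of continuous data), and a standard density plus three-$\ep$ argument using the maximal inequality extends the convergence to all of $L^1(\T)$, in particular to $f^\ast \in L^\infty(\T) \subset L^1(\T)$.

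The main obstacle is the weak-type maximal inequality, which is the analytic core of the theorem: this is where the geometry of the Stolz region enters, through the pointwise majorization of $P(z, e^{i\theta})$ by a Hardy-Littlewood-type kernel on the boundary. The compactness plus Poisson-representation step is essentially formal, and the density argument is routine once the maximal bound is in hand. A small technical point is that the weak-$\ast$ limit is \emph{a priori} only subsequential; however, once the Poisson representation is in place, the uniqueness of nontangential limits delivered by the second step forces $f^\ast$ to be independent of the chosen subsequence, and in fact equals the nontangential boundary value of $f$ almost everywhere.
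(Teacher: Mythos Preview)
Your proof outline is correct and follows the standard modern approach to Fatou's theorem: represent $f$ as a Poisson integral of an $L^\infty$ boundary function obtained by weak-$\ast$ compactness, then deduce nontangential convergence from the weak-type $(1,1)$ bound for the nontangential maximal operator and a density argument. The steps you flag as the analytic core (the Poisson kernel majorization on Stolz regions leading to the Hardy--Littlewood maximal inequality) are indeed where the work lies, and your handling of the subsequential versus full limit is fine.

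However, the paper does not actually prove this statement. Fatou's theorem is quoted in the introduction as classical background, with a citation to Fatou's 1906 paper, and no proof is given or sketched anywhere in the text. So there is no ``paper's own proof'' to compare against; you have supplied a complete (and standard) argument where the paper simply invokes the result.
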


The Julia-Carath\'eodory theorem concerns the existence of nontangential boundary derivatives for analytic self-maps of the unit disk. The condition is given in terms of the classical Julia quotient
\[
		J_f(z) = \frac{\dist(f(z), \partial \Omega)}{\dist(z,\partial D)}. 
\]

\begin{theorem}[Julia-Carath\'eodory \cite{ju20, car29}]\label{jc}
Let $f: \D \to \cc\D$ be an analytic function. Let $\tau$ be a point in $\T = \partial \D$. The following are equivalent:
\begin{enumerate}
    \item There exists a sequence $\la_n \subset \D$ tending to $\tau$ such that $J_f(\la_n)$ is bounded as $\la_n \to \tau$;
    \item for every sequence $\la_n$ in $\D$ tending to $\tau$ nontangentially, the sequence $J_f(\la_n)$ is bounded;
    \item the function $f$ has a conformal linear approximation near $\tau$. That is, there exist an $\omega = \ph(\tau) \in \T$ and an $\eta = f'(\tau) \in \C$ so that 
    \[
    f(z) = \omega + \eta(z - \tau) + o(\abs{z - \tau}).
    \]
    as $\la \nt \tau$.
\end{enumerate}
\end{theorem}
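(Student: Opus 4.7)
The plan is to close the cycle $(3) \Rightarrow (1) \Rightarrow (2) \Rightarrow (3)$. After a rotation I may assume $\tau = 1$, and I will work with the comparable Julia quotient $J_f(z) = (1-|f(z)|)/(1-|z|)$. The trivial direction $(3) \Rightarrow (1)$ is handled by evaluating the linear expansion along any radial sequence and using the triangle inequality to bound $1-|f(r\tau)|$ by $O(1-r)$.

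For $(1) \Rightarrow (2)$ the core tool is Julia's lemma: if $\alpha := \liminf_n J_f(\la_n)$ is finite along some sequence approaching $\tau$, then the Schwarz-Pick inequality applied to the hyperbolic disks of radius comparable to $1-|\la_n|$ centered at $\la_n$, passed to the limit, yields a unimodular $\omega$ such that $f$ carries each horocycle at $\tau$ of curvature $k$ into the horocycle at $\omega$ of curvature $\alpha k$. Since a Stolz region $S_{\tau,M}$ fits inside a union of horocycles at $\tau$ whose radii stay comparable to the distance to $\tau$, horocycle containment upgrades to a uniform bound $J_f|_{S_{\tau,M}} \leq C(M)\alpha$, establishing (2). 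As a by-product, $f(z) \to \omega$ along every nontangential approach.

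The substantive step is $(2) \Rightarrow (3)$. I would transport to the upper half-plane via a Cayley map sending $\tau$ and $\omega$ both to $\infty$, producing a self-map $g : \Ha \to \Ha$. In these coordinates the Julia quotient becomes comparable to $\im g(w)/\im w$ and Stolz regions at $\tau$ become truncated cones at $\infty$. Consider the rescaled family $g_t(w) = g(tw)/t$ for $t > 0$; Schwarz-Pick on $\Ha$ makes the family normal, and because $g$ has nontangential limit $\infty$ at $\infty$ (inherited from $f \to \omega$), every subsequential limit is of the form $w \mapsto \eta w$ with $\eta \geq 0$ real. The equality case of Julia's lemma, together with a sequence realizing the $\liminf$, fixes $\eta = \alpha$. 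Pulling back through the Cayley map yields the expansion $f(z) = \omega + \eta(z-\tau) + o(|z-\tau|)$ along nontangential approach, after identifying the phase $\eta \mapsto \alpha \omega \overline{\tau}$.

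The main obstacle I foresee is the uniqueness of the multiplier $\eta$: bounded imaginary-part ratio on cones leaves room \emph{a priori} for distinct subsequential limits $w \mapsto \eta w$ with different slopes. I expect to resolve this by pairing the Julia horocycle inclusion (which forces $\eta \leq \alpha$ for every subsequential limit) with an infimum-realizing sequence (which forces $\eta \geq \alpha$ on some limit), and then invoking a superharmonicity argument on $\im g(w)/\im w$---equivalently, Schwarz-Pick monotonicity of the Julia quotient along hyperbolic geodesics---to upgrade subsequential convergence to full nontangential convergence. This monotonicity is in fact exactly the ingredient that drives the equality case in Julia's lemma, so the two ends of the argument reinforce one another and close the cycle.
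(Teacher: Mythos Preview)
The paper does not supply its own proof of this statement: Theorem~\ref{jc} is quoted as the classical Julia--Carath\'eodory theorem with citations to \cite{ju20, car29}, and the paper immediately moves on. So there is no in-paper argument to compare against.

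Taken on its own, your outline follows the standard route and the architecture $(3)\Rightarrow(1)\Rightarrow(2)\Rightarrow(3)$ is correct. The implication $(3)\Rightarrow(1)$ is fine, and your use of Julia's lemma for $(1)\Rightarrow(2)$ is the right idea. The one place where your sketch is genuinely thin is $(2)\Rightarrow(3)$: you assert that every subsequential limit of the rescalings $g_t(w)=g(tw)/t$ is of the form $w\mapsto\eta w$, but normality alone only gives you a Pick self-map of $\Ha$ as a limit, not automatically a linear one. You need an extra ingredient---either (a) the Nevanlinna representation of $g$, which lets you peel off the linear part $\alpha w$ directly and show the remainder is $o(w)$ nontangentially, or (b) a sharper use of the horocycle inclusion from Julia's lemma to pin the limit map to a dilation. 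Your closing paragraph gestures at (b), but as written it conflates the uniqueness of the slope $\eta$ with the linearity of the limit map; these are separate issues and the linearity should be argued first. Once that is nailed down, the rest of your plan (pairing the $\liminf$-realizing sequence with the horocycle upper bound, then invoking monotonicity of the Julia quotient along geodesics) is the classical way to finish.
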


Long lines of research continue the investigation and generalization of these results. Fatou type theorems have been investigated classically by Littlewood \cite{littlewood1927} and Zygmund \cite{zygmund49}, and more recently in work by Nagel, Rudin, and Shapiro \cite{nrs1982} and Nagel and Stein \cite{ns84}. Results investigating Julia questions can be found, e.g., in a body of work including \cite{nev22, ab98, jaf93, lugarnedic}. Other work on boundary regularity can be found in a series of papers by Bolotnikov and Kheifets (see, e.g.,  \cite{bk06}).

Points $\tau$ for which condition (2) in Theorem \ref{jc} is satisfied are called \emph{$B$-points}. They have been studied extensively in the two and several variable settings. See, e.g., \cite{amy10a, aty12, bps1, pascoePEMS, mprevisit, tdopmat}.

Throughout the current investigation, we will assume the axiom of choice.

\section{Synopsis of results}

\subsection{The averaged Julia-Fatou regime}
 In the prequel \cite{ajq} and in this work, we seek to establish a unified Julia-Fatou theory that allows controlled {\bf tangential} approach to the boundary by way of averaging boundary behavior in the Julia-Fatou quotients. The results in this paper begin with the \dfn{Pick class} of analytic self-maps of the complex upper half plane. Understanding boundary behavior of functions in the Pick class gives insight into measure theoretic questions by way of the classical Cauchy transform of positive measures on $\R$.

Let $\Pi$ denote the upper half plane in $\mathbb{C}.$ Let $f: \Pi \rightarrow \cc\Pi$ be an analytic function. Let $\mathfrak{k}: \mathbb{R}^{+} \rightarrow \mathbb{R}^{+}.$
We define the \dfn{type-$\mathfrak{k}$ Julia-Fatou quotient} to be 
    \beq\label{jq}\fkf(z) = \frac{\IM f(z)}{\mathfrak{k}(\IM z)}.\eeq
Specific choices of the function $\mathfrak{k}$ recover the classical Fatou and Julia quotients on sets of nontangential approach, as well as intermediate behavior.

The averaged Julia-Fatou quotient can require more information about the boundary behavior of $f$ than can be recovered from a Stolz region. Hence, we define a larger family of sets that can be contained in regions that approach the boundary at $\tau$ tangentially, but with control over the approach in terms a boundary function $\la$.

Let $\lambda:[0,\infty)\rightarrow \R^{\geq 0}$ be a function. 
	We define a \dfn{$\lambda$-Stolz region at $\tau$} to be the set 
	\begin{align*} 
		S^\lambda_{\tau}= \big\{z\in D |  &\dist (z, \partial D) \geq \lambda(C), \\
		&\sqrt{\dist(z, \tau)^2 -\dist (z, \partial D)^2} \leq C, \text{for some } C>0\big\}.
	\end{align*}
	Note that a classical Stolz region with parameter $0 < M \leq 1$
	is a $\lambda$-Stolz region where $\lambda(t) = \left(\sqrt{\frac{M^2}{1-M^2}}\right)t.$

We will be concerned with understanding and classifying boundary regularity in terms of the Julia-Fatou quotient. Certain notions from the classical regime map directly into the Julia-Fatou setting. In analogy with the classical notion in Theorem \ref{jc}, we call $\tau \in \partial \Pi$ a \dfn{$B$-point} for $f$ if there exists a $\la$-Stolz region at $\tau$ such that  $\fkf$ is bounded as $z \to \tau$ through $S_\tau^\la$.

 Using Pick functions rather than their conformal relatives gives us access to Nevanlinna's eminently useful representation theorem \cite{nev22}, which states that any analytic $f:\Pi \to \cc\Pi$ must be of the form
\beq\label{fullnev}
		f(z) = a + bz + \int_\R \frac{1}{t-z} - \frac{t}{1+t^2} \dd\mu(t)
	\eeq
	for all $z \in \Pi$, where $a > 0, b \in \R$ and $\mu$ is a positive Borel measure on $\R$ with $\int_\R (1 + t^2)^{-1} \dd\mu < \infty$,
	and that moreover, any $f$ of such a form must be an analytic map from $\Pi$ to $\cc\Pi.$

The measure $\mu$ appearing in \eqref{fullnev} is a scalar multiple of the so-called \dfn{Nevanlinna measure} $\mu_f$ corresponding to $f$, defined as the weak limit
of the measures (in $x$) given by $\im f(x+iy)$ as $y \rightarrow 0.$ Where the meaning is unambiguous, we will often use drop the $f$ and write $\mu.$

The function $f$ may be used to study geometric features of the underlying measure $\mu$ and vice versa. Furthermore, via the spectral theorem, such functions can be used to study operator theory in general (a nice systematic overview can be found in Nikolski  \cite{nikolski1, nikolski2}) and perturbation theory in particular (see, e.g., Barry Simon's surveys of the foundations of the area in \cite{simonlec, simontrace}).

We adopt the approach of controlled averaging of Julia-Fatou type quotients initiated in \cite{ajq}.
Let $\lambda: \mathbb{R}^{+} \rightarrow \mathbb{R}^{\geq 0}.$ 
We define an \dfn{averaged Julia-Fatou type $\mathfrak{k}$ quotient with respect to $\lambda$ at the point $\tau$} to be:
$$A\mathfrak{F}^{\mathfrak{k},\tau}_{f,\lambda}(\eps) = \frac{1}{2\eps}\int^\eps_{-\eps}\fkf(\tau+x+i\lambda(\eps))dx.$$

Recall that we say that a function $\eta$ is $O(g(t))$ as $t \to 0$ if for some choice of $k >0$, there exists $\eps >0$ so that $\eta(t) \leq k g(t)$ for $\abs{t} < \eps$. We say that a function is $o(g(t))$ if for any choice of $k >0$ there exists $\eps >0$ so that $\eta(t) < k g(t)$ for $\abs{t} < \eps$. (In particular, $\eta$ is $o(1)$ at $0$ means that $\eta$ vanishes at $0$). Likewise, $\eta$ is $\Omega(g(t))$ as $t \to 0$ if for some choice of $k > 0$, there exists $\eps > 0$ so that $\eta(t)  > k g(t)$ for $\abs{t} < \eps$.

We obtain a so-called \emph{augur lemma} demonstrating the relationship between the measure density and boundedness of a Julia-Fatou type quotient whenever $\la(t)$ is $O(t)$ as $t\to 0$:

\begin{theorem}[Augur Lemma]\label{introaugur}
Let $f:\Pi \to \cc\Pi$ be analytic. Assume that $\la$ is $O(t)$. For sufficiently small $\eps > 0$,
\beq \label{augurinequality}
L_0 \frac{\mu(-\ep+\tau,\ep+\tau)}{(\mathfrak{k}\circ\lambda)(\eps)\ep} \leq A\mathfrak{F}^{\mathfrak{k},\tau}_{f,\lambda}(\eps)  \leq L_1 \frac{\mu(-2\ep+\tau,2\ep+\tau)}{(\mathfrak{k}\circ\lambda)(\eps)\ep} + L_2 \frac{\lambda(\eps)}{(\mathfrak{k}\circ\lambda)(\eps)\ep^2}
\eeq
where $L_i$ are constants which depend on $f,\mathfrak{k}, \lambda, \tau$ but are independent of $\eps.$
\end{theorem}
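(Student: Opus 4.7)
The plan is to collapse the averaged quotient to a single Stieltjes integral against $\mu$, using the Nevanlinna representation \eqref{fullnev} to compute $\IM f$ explicitly and doing the inner $x$-integration in closed form, and then to estimate that $\mu$-integral on three regions to extract the two upper-bound terms and the lower bound. Setting $y = \lambda(\eps)$, substituting \eqref{fullnev}, and invoking Tonelli (since $\mu$ and the Poisson kernel are nonnegative) gives
\[
\afkf(\eps) \;=\; \frac{b y}{\mathfrak{k}(y)} \;+\; \frac{1}{2\eps\,\mathfrak{k}(y)}\int_\R \Psi(t;\eps,y)\, d\mu(t),
\]
where
\[
\Psi(t;\eps,y) \;:=\; \arctan\!\tfrac{t-\tau+\eps}{y} - \arctan\!\tfrac{t-\tau-\eps}{y} \;\in\; [0,\pi]
\]
is the antiderivative of the averaged Poisson kernel, even and unimodal in $t-\tau$. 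The $by/\mathfrak{k}(y)$ piece is nonnegative (so the lower bound is unaffected by dropping it) and, on any range $\eps \leq \eps_0$, it is bounded by $b\eps_0^2 \cdot y/(\eps^2\mathfrak{k}(y))$, hence absorbable into $L_2$ for the upper bound.

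For the lower bound, restrict to $|t-\tau|\leq\eps$: by unimodality, the minimum of $\Psi$ there occurs at the endpoints and equals $\arctan(2\eps/y)$. The hypothesis $\lambda(\eps) = O(\eps)$ gives $y \leq K\eps$ for small $\eps$, so $\arctan(2\eps/y) \geq \arctan(2/K) > 0$, and we may take $L_0 = \tfrac12\arctan(2/K)$. For the upper bound, split at $|t-\tau| = 2\eps$: on the inner region, $\Psi \leq \pi$ yields the first term of \eqref{augurinequality}; on the outer region, the mean value theorem (using $|t-\tau\pm\eps| \geq |t-\tau|/2$) gives the decay estimate $\Psi(t;\eps,y) \leq 8\eps y/(t-\tau)^2$. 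Split the outer integral once more at $|t-\tau|=1$: the annulus $2\eps\leq|t-\tau|\leq 1$ contributes at most $\bigl(8\eps y/(4\eps^2)\bigr)\cdot\mu([\tau-1,\tau+1]) = O(y/\eps)$, and on $|t-\tau|\geq 1$ one compares $1/(t-\tau)^2$ with $C_\tau/(1+t^2)$ and invokes Nevanlinna's integrability $\int(1+t^2)^{-1}d\mu<\infty$ to obtain an $O(\eps y)$ contribution, which for $\eps\leq\eps_0$ is absorbed into the $O(y/\eps)$ part. Dividing through by $2\eps\mathfrak{k}(y)$ produces the required $\lambda(\eps)/(\mathfrak{k}(\lambda(\eps))\eps^2)$ second term.

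The delicate step is the middle annulus $[2\eps,1]$: the bound $1/(t-\tau)^2 \leq 1/(4\eps^2)$ there carries a seemingly dangerous $\eps^{-2}$, matched exactly by the $8\eps y$ from the mean value theorem to give net $y/\eps$; together with the $1/(2\eps\mathfrak{k}(y))$ out front, this produces precisely the $\lambda(\eps)\eps^{-2}$ order, and no $\eps$-factor is lost. The remaining task is bookkeeping: confirming that all constants $L_0,L_1,L_2$ depend only on $f,\mathfrak{k},\lambda,\tau$ and the auxiliary cutoff $\eps_0$, not on $\eps$ itself, and that the $b z$ term from the Nevanlinna representation is harmless in both directions for $\eps$ small.
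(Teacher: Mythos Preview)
Your proof is correct and follows essentially the same route as the paper's Section~\ref{secaugur}: compute the inner $x$-integral against the Poisson kernel to reduce $\afkf(\eps)$ to a single $\mu$-integral of an arctangent difference, use the $O(t)$ hypothesis on $\lambda$ to bound that difference below on $|t-\tau|\le\eps$, and split the upper bound at $|t-\tau|=2\eps$ with the tail controlled by Nevanlinna integrability of $(1+t^2)^{-1}$. Your further split of the outer region at $|t-\tau|=1$ is a cosmetic variant of the paper's $(1+t^2)/t^2 \le C/\eps^2$ substitution, and your explicit treatment of the $by$ term from \eqref{fullnev} is slightly more careful than the paper, which absorbs it silently.
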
 
Inequality \eqref{augurinequality} is especially nice when $\frac{\lambda(\eps)}{(\mathfrak{k}\circ \lambda)(\eps)\eps^2}$ is bounded as $\eps$ goes to $0$ as the $L_2$ term is bounded. 
The proof of the above theorem is the content of Section \ref{secaugur}.

 \begin{example}
  Let $\la(t) = t^\alpha$ where $\alpha > 1$ and $\k(t) = t^\beta$. The bad term in the augur inequality \eqref{augurineq} is
  \[
   \frac{\la(\eps)}{(\mk \circ \la)(\eps) \eps^2} = \eps^{\alpha(1 - \beta) - 2}
  \]
which will be bounded as $\eps \to 0$ when $\alpha(1 - \beta) - 2 > 0$. Notice that this provides a qualitative description of the relationship between the nature of the Julia-Fatou quotient (in terms of $\k$) and the width of the interval of integration (in terms of $\la$); in this case, for the quantity $\alpha\beta$ to stay large, a given $\beta$ may require a very large $\alpha$ for the augur inequality to tightly say that the measure $\mu$ has sub-$t^{\alpha\beta}$ density. 
 \end{example}

Let $\gandalf: \R^+ \to \R^+.$ We say that $f$ is \dfn{$\gandalf$-fortunate at $\tau$} if 
\begin{enumerate}
\item there exist $\mk$ and $\la$ such that $\gandalf(Ct) = (\mk \circ \la)(t)$ where $\la$ is $O(t)$, 
\item the quotient $\frac{\lambda(\eps)}{\gandalf(C\eps)\eps^2}$ is bounded as $\eps$ goes to $0$ for some $C>0$,
\item   and $\cc\lim_{\eps \to 0} A\mathfrak{F}^{\mathfrak{k},\tau}_{f,\lambda}(\eps) < \infty$.
\end{enumerate}
We say a measure $\mu$ has \dfn{sub-$\gandalf$-density at $\tau$} if $\frac{\mu(-\ep+\tau,\ep+\tau)}{\gandalf(C\eps)\eps}$
is bounded as $\eps \to 0$ for some $C>0.$
(We employ the $C$ in our definition so that our notions will be scale invariant.)

An immediate consequence of Theorem \ref{introaugur} gives a relationship between $f$ being $\gandalf$-fortunate
and the measure $\mu$ having sub-$\gandalf$-density. Moreover, subject to the constraints of the augur inequality, the decomposition $\gandalf(Ct) = (\mk \circ \la)(t)$ can be essentially arbitrary.
\begin{corollary} \label{fortunedensity}
Let $\gandalf: \R^+ \to \R^+.$
Let $f:\Pi \to \cc\Pi$ be analytic. The function  $f$ is $\gandalf$-fortunate at $\tau$ if and only if the Nevanlinna measure $\mu$ corresponding to $f$ has sub-$\gandalf$-density at $\tau.$

Moreover, in such a case, there exists $C>0$  such that for every $\mk, \la$ satisfying
\begin{enumerate}
    \item $\gandalf(Ct) = (\mk \circ \la)(t)$, 
    \item $\frac{\lambda(\eps)}{\gandalf(C\eps)\eps^2}$ is bounded as $\eps$ goes to $0$, 
    \item and $\la(t)$ is $O(t)$ as $t\rightarrow 0,$
\end{enumerate}    
we have that $\cc\lim_{\eps \to 0} A\mathfrak{F}^{\mathfrak{k},\tau}_{f,\lambda}(\eps) < \infty$. 

The converse direction relies on Lemma \ref{sarumandecomposition}, which states that any $\gandalf$ can be decomposed as $\gandalf = \k \circ \la$ with the desired properties.
\end{corollary}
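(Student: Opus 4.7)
The plan is to derive both implications directly from the Augur inequality \eqref{augurinequality} by substituting the identity $(\mathfrak{k} \circ \lambda)(\eps) = \gandalf(C\eps)$, which recasts both halves of \eqref{augurinequality} as pure statements about $\gandalf$ and $\mu$. The forward implication will come from the left half; the reverse implication and the ``moreover'' clause will come from the right half, with Lemma \ref{sarumandecomposition} supplying the existence of a witnessing decomposition.

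For the forward direction I would suppose $f$ is $\gandalf$-fortunate, witnessed by some $\mathfrak{k}, \lambda, C$, and apply the left half of \eqref{augurinequality}, namely
\[
L_0 \frac{\mu(-\eps+\tau,\eps+\tau)}{\gandalf(C\eps)\,\eps} \leq A\mathfrak{F}^{\mathfrak{k},\tau}_{f,\lambda}(\eps).
\]
Condition (3) of fortune bounds the right side, so the left is bounded as well, which is exactly sub-$\gandalf$-density at $\tau$ with the same constant $C$.

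For the reverse direction, suppose $\mu$ has sub-$\gandalf$-density, so $\mu(-\eps+\tau,\eps+\tau)/(\gandalf(C_0\eps)\eps)$ is bounded for some $C_0 > 0$. I would set $C = 2C_0$ and apply Lemma \ref{sarumandecomposition} to produce a decomposition $\mathfrak{k}, \lambda$ satisfying (1)--(3) of the moreover clause for this $C$. The right half of \eqref{augurinequality} then yields
\[
A\mathfrak{F}^{\mathfrak{k},\tau}_{f,\lambda}(\eps) \leq L_1 \frac{\mu(-2\eep:=\eps+\tau,2\eps+\tau)}{\gandalf(C\eps)\,\eps} + L_2 \frac{\lambda(\eps)}{\gandalf(C\eps)\,\eps^2},
\]
whose second term is bounded by hypothesis (2). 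For the first term, since $\gandalf(C\eps) = \gandalf(C_0(2\eps))$, it rewrites as $2 L_1\, \mu(-2\eps+\tau,2\eps+\tau) / (\gandalf(C_0(2\eps))(2\eps))$, which is bounded by sub-density at scale $2\eps$. Hence $A\mathfrak{F}^{\mathfrak{k},\tau}_{f,\lambda}(\eps)$ stays bounded as $\eps \to 0$, proving fortune. Because this argument works verbatim for \emph{any} decomposition matching (1)--(3) with this $C$, the moreover clause follows simultaneously.

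The main obstacle is really just the bookkeeping around the doubling constant: the Augur upper bound widens the mass interval from $\eps$ to $2\eps$, so the sub-density hypothesis must be invoked one dyadic scale out, which is absorbed by the rescaling $C = 2C_0$. All analytic content is carried by Theorem \ref{introaugur}, the existence of the decomposition is provided by Lemma \ref{sarumandecomposition}, and the corollary itself is pure algebra on top.
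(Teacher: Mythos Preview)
Your proposal is correct and matches the paper's approach: the corollary is stated as an immediate consequence of the Augur inequality, with the converse direction invoking Lemma~\ref{sarumandecomposition} to manufacture a witnessing decomposition, and you have filled in precisely those details. Your handling of the doubling $\eps \mapsto 2\eps$ via $C = 2C_0$ is the natural bookkeeping the paper leaves implicit.
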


 \begin{example}
	Corollary \ref{fortunedensity} says that
		\[\cc{\lim_{\eps \rightarrow 0}} \frac{\mu((-\eps,\eps))}{2\eps} < \infty\]
	if and only if 
		\[ \cc{\lim_{\eps \rightarrow 0}} \int^\eps_{-\eps} \IM f(t+i\eps^{2}) \dd t <\infty. \]
	 (Here take $\gandalf(t)=\k(t)=1$ and $\la(t)=t^{2}.$)
	Note the relation of the first limit to the Lesbesgue differentiation theorem and Radon-Nikodym theorem.

	More generally, let $\nu$ be a locally finite measure on $\mathbb{R}$ which is supported at $0.$
	Then, there exists a $C>0$ such that
		\[\cc{\lim_{\eps \rightarrow 0}} \frac{\mu((-\eps,\eps))}{\nu((-C\eps,C\eps))} < \infty\]
	if and only if
		there exists a $K>0$ such that
		\[ \cc{\lim_{\eps \rightarrow 0}} \int^\eps_{-\eps} \frac{\IM f(t+i\eps^{2}\nu(-K\eps,K\eps))}{\nu((-K\eps,K\eps))}\eps \dd t <\infty. \]
	That is, we can measure the relative density of two measures.
 \end{example}

We say a function $\gandalf: \R^+ \to \R^+$ is a \dfn{$\gamma$-augury} if there exists a $C>0$ such that
    $$\frac{t\gandalf(Ct)\dd\gamma(t)}{\gamma(t)^2}$$
is integrable on $[0,1).$ (Here $\dd\gamma(t)$ denotes the distributional derivative of $\gamma.$)

Let $\gamma: [0,\infty)\rightarrow \mathbb{R}^{\geq 0}$ be a monotone increasing function such that $\gamma(t)$ is $O(1)$ as $t \to 0$.
We say an analytic function $f: \Pi \rightarrow \cc \Pi$
is \dfn{$\gamma$-regular at $\tau$} whenever there exists a $C>0$ such that $\frac{1}{\gamma(C|t-\tau|)}$ is integrable on a neighborhood of $\tau$
with respect to the Nevanlinna measure $\mu_f.$ 

We obtain the following theorem relating $\gamma$-regularity of the measure $\mu$ to the $\gandalf$-fortune of $f.$
\begin{theorem}\label{introregfort}
Let $\gamma$ be $O(1)$ and $f:\Pi \to \cc\Pi$ be analytic. The function $f$ is $\gamma$-regular at $\tau$ if and only if there exists a $\gamma$-augury $\gandalf$ such that $f$ is $\gandalf$-fortunate at $\tau$.
\end{theorem}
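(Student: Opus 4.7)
The plan is to use Corollary \ref{fortunedensity} to recast the $\gandalf$-fortunate side of the equivalence purely as a condition on the Nevanlinna measure: setting $m(\eps) := \mu((\tau-\eps,\tau+\eps))$ and taking $\tau = 0$ without loss of generality, $f$ is $\gandalf$-fortunate iff $\mu$ has sub-$\gandalf$-density, i.e., $m(\eps) \leq K\,\gandalf(C_1\eps)\,\eps$ for some $K, C_1 > 0$ and all sufficiently small $\eps$. The theorem therefore reduces to the purely measure-theoretic equivalence: there exists $c > 0$ with $\int_0^\delta \gamma(ct)^{-1}\,dm(t) < \infty$ if and only if there exist a $\gamma$-augury $\gandalf$ and a constant $C_1 > 0$ with $m(\eps) \leq K\gandalf(C_1\eps)\eps$. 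The bridge between the two sides is a Lebesgue--Stieltjes integration by parts against the kernel $1/\gamma$.

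For the backward direction, starting from sub-$\gandalf$-density together with the $\gamma$-augury estimate $\int_0^1 t\,\gandalf(C_2 t)\,d\gamma(t)/\gamma(t)^2 < \infty$, I would apply
\[
\int_0^\delta \frac{dm(t)}{\gamma(C't)} \;=\; \frac{m(\delta)}{\gamma(C'\delta)} \;+\; \int_0^\delta \frac{m(t)\,d\gamma(C't)}{\gamma(C't)^2}
\]
(treating $\gamma$ as continuous; jumps are addressed below). Substituting $m(t) \leq K_1 t\gandalf(C_1 t)$ into the second term and changing variables $u = C't$ converts that integral into $(K_1/C')\int_0^{C'\delta} u\,\gandalf((C_1/C')u)\,d\gamma(u)/\gamma(u)^2$. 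Choosing $C' = C_1/C_2$ aligns the inner argument with the augury hypothesis and so makes the integral finite; hence $f$ is $\gamma$-regular with constant $C'$.

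For the forward direction, given $\gamma$-regularity $\int_{-\delta}^\delta \gamma(c|t|)^{-1}\,d\mu(t) < \infty$, I would define $\gandalf(\eps) := m(\eps)/\eps$ (if $m$ vanishes near $0$, simply take any fixed $\gamma$-augury such as $\gandalf(t) = \gamma(t)^2/t$, which is immediate to verify). Sub-$\gandalf$-density then holds with $K = C_1 = 1$ by construction, so $f$ is $\gandalf$-fortunate by Corollary \ref{fortunedensity}. For the $\gamma$-augury condition on this $\gandalf$, substitute $\gandalf(Ct) = m(Ct)/(Ct)$ and swap the order of integration by Fubini:
\[
\int_0^1 \frac{t\,\gandalf(Ct)\,d\gamma(t)}{\gamma(t)^2} \;=\; \frac{1}{C}\int_{|s|<C}\!\!\left(\int_{|s|/C}^1 \frac{d\gamma(t)}{\gamma(t)^2}\right) d\mu(s) \;\leq\; \frac{1}{C}\int \frac{d\mu(s)}{\gamma(|s|/C)}.
\]
Setting $C = 1/c$ identifies the right-hand side with the $\gamma$-regularity integral, which is finite by hypothesis.

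The main obstacle will be the bookkeeping of three independent existential constants (the $C_1$ in sub-density, $C_2$ in augury, and $c$ in regularity) and verifying they propagate compatibly through the two rescalings $C' = C_1/C_2$ and $C = 1/c$; the algebra is routine but demands attention. A secondary technical point is the handling of jumps of $\gamma$ or a possible atom of $\mu$ at $\tau$: in the integration by parts one uses $-d(1/\gamma(t)) = d\gamma(t)/(\gamma(t)\gamma(t-))$, whose difference from the continuous formula involves the factor $\gamma(t)/\gamma(t-)$ that stays bounded wherever $\gamma(t-) > 0$; if $\gamma(0^+) = 0$ then $\gamma$-regularity itself forces $\mu(\{\tau\}) = 0$, while if $\gamma(0^+) > 0$ then $1/\gamma$ is bounded near $\tau$ and both sides of the equivalence hold trivially.
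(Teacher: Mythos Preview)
Your proposal is correct and follows essentially the same route as the paper. The paper's proof (Theorem \ref{thmregfort}) invokes the ``layer cake principle'' to write $\int_{[-1,1]}\gamma(|t|)^{-1}\,d\mu(t)$ in terms of $\int \mu([-t,t])\,\gamma(t)^{-2}\,d\gamma(t)$, which is exactly your integration-by-parts/Fubini identity; and in the forward direction it makes the same choice $\gandalf(t)=\mu(-t,t)/t$ that you make, then appeals to Corollary \ref{fortunedensity} (via Lemma \ref{sarumandecomposition}) just as you do. Your treatment of the scaling constants and of the jump/atom technicalities is more explicit than the paper's, but the underlying argument is the same.
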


We prove the above theorem as Theorem \ref{thmregfort}.

\subsection{The foliation}

\subsubsection{Application to spectral theory}
{The present endeavor is motivated by the connection between $\gamma$-regularity and the related tangential analysis of boundary behavior of Pick functions in \cite{ajq} and operator perturbation theory, particularly in the the work of Liaw and Treil \cite{lt2016, lt2017, lt2019}. For a self-adjoint operator $A$ on a separable Hilbert space $\hilbert$, one can define the function
\[
 F(z) = \ip{(A - zI)\inv \ph}{ \ph}= \int_\R \frac{d\mu(t)}{t - z},
\]
where the integral representation is the \emph{Cauchy transform} of $\mu$. The boundary behavior of $F$ contains information about the spectrum of $\mu$. In particular, the singular set of $\mu$ is supported on a set where the nontangential boundary limits of $\IM F$ are infinite.

A rank one perturbation $A_\alpha$ for a parameter $\alpha \in \R$ is the operator
\[
 A_\alpha = A + \alpha\ip{\cdot}{\ph}_\hilbert \ph.
\]
The theory of rank one perturbations depends on the observation that the functions $F_\alpha(z) = \int_\R \frac{\mu_\alpha(t)}{t - z}$ associated with $A_\alpha$ have a simple relation with the function $F$ associated with $\alpha$, the so-called \emph{Aronszajn-Krein formula}:
\[
 F_\alpha = \frac{F}{1 + \alpha F}.
\]
Rank one perturbation theory is in terms of the classical Julia-Fatou theory -- our motivation is to extend the more refined boundary behavior established in \cite{ajq} to understand the spectrum of the measures $\mu$ associated with Pick functions $F$.
}

\subsubsection{Spectral foliation}

{ Recall that by the Lebesgue Decomposition Theorem, a Borel measure $\mu$ on $\R$ can be decomposed as 
\[
 \mu = \mu_{\rm ac} + \mu_{\rm sc} + \mu_{\rm pp}
\]
corresponding to the absolutely continuous, singular continuous, and point parts of $\mu$.}

{ When $\mk$ is $O(1)$ and $\la$ is $\Omega(t)$ as $t\to 0$, and $\lim_{z \nt 0 } \mathfrak{F}_f^1(z)$ exists, then for any surjective M\"obius transformation $M$ of the upper halfplane to itself which is nonsingular at $f(\tau)$, then 
\beq\label{conformalajf}
 \cc\lim_{\eps \to 0} A\mathfrak{F}^{\mathfrak{k},\tau}_{f,\lambda}(\eps) < \infty \text{ iff }  \cc\lim_{\eps \to 0} A\mathfrak{F}^{\mathfrak{k},\tau}_{(M\circ f),\lambda}(\eps) < \infty.
\eeq
Since the finiteness of this quantity is (essentially) conformally invariant, we can use this quantity to foliate the spectrum of $\mu$, including the singular part. As was noted in \cite{lt2017}, the points such that $\lim_{z \nt 0 } f(z)$ does not exist will be immaterial to the spectrum. Accordingly, these points are called the \dfn{ethereal spectrum}, denoted $\ethereal_\mu$. Points which correspond to poles on the real line or $B$-points behave qualitatively similarly and comprise the \dfn{Julia spectrum}, which contains and behaves qualitatively like the point spectrum. Points such that $\lim_{z \nt 0 } \mathfrak{F}_f^1(z)$ is positive represent the \dfn{true ac spectrum}. }

{
Finally, points such that $\lim_{z \nt 0 } \mathfrak{F}_f^1(z)$ is zero or infinity which are not $B$-points or poles constitute the \dfn{cryptospectrum}, denoted $\enigma_\mu$, which notably contains and behaves qualitatively like the non-point singular spectrum. The union of the cryptospectum and ethereal spectrum can be further divided up into \dfn{$\mk,\la$-enigmas}, which are the sets
 
\beq \label{enigmadef}
\enigma_{\mu, \la}^\k := \left\{\tau : \cc\lim_{\eps \to 0} A\mathfrak{F}^{\mathfrak{k},\tau}_{f,\lambda}(\eps) < \infty \text{ OR } \cc\lim_{\eps \to 0} A\mathfrak{F}^{\mathfrak{k},\tau}_{-1/f,\lambda}(\eps) < \infty\right\}.
\eeq 

The following theorem says that various pieces of the spectrum are conformally invariant. Specifially, enigmas with $\lambda$ that correspond to suitably narrow (but often tangential) 
$\lambda$-Stolz regions for integration of the corresponding averaged Julia-Fatou type quotient are conformally invariant.
\begin{theorem}\label{specinv}
 Let $f:\Pi \to \cc\Pi$ be analytic and $\mu$ the associated measure. 
\begin{enumerate}
\item The ethereal spectrum, the Julia spectrum, and the ac spectrum are conformally invariant.
\item Suppose that $\gamma$ is $O(t)$ and monotone. Suppose that $\la$ is $O(t)$ and  $\Omega(\gamma)$. If $\k\circ\la$ is a $\gamma$-augury, then $\enigma_{\mu, \la}^{\k}$ is conformally invariant.
\end{enumerate}
\end{theorem}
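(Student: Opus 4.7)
My plan is to reduce each statement to an analysis of the generators of the M\"obius automorphism group of $\Pi$: real translations $z \mapsto z + r$, positive dilations $z \mapsto sz$ ($s > 0$), and the inversion $z \mapsto -1/z$. Translations only shift $\tau$, and dilations rescale $\IM f$ and the averaging variable symmetrically, so both leave every defining condition in parts~(1) and (2) trivially intact. The whole content is in handling a general M\"obius $M(w) = (aw + b)/(cw + d)$, which I will split into two cases based on whether $M$ is nonsingular at the (appropriate notion of) boundary value $f(\tau)$. In the singular case, the algebraic identity $-1/(M\circ f) = M' \circ f$ with $M'(w) = -(cw+d)/(aw+b)$, combined with the fact that $ad - bc \ne 0$ forces $-b/a \ne -d/c$, shows that $M'$ is nonsingular at $f(\tau)$. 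Thus the singular case always reduces to the nonsingular case after composing with $f \mapsto -1/f$, a symmetry built into both the $-1/f$ branch of the Julia/pole condition and the $-1/f$ branch of the enigma definition.

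For part~(1) the substance is the single computational identity
\[
\IM(M \circ f)(z) \;=\; (ad - bc)\,|cf(z) + d|^{-2}\,\IM f(z).
\]
Using this, I would get ethereal invariance from the continuity of $M$ at $f(\tau)$; Julia invariance from the fact that $|cf(\tau) + d|$ is bounded above and bounded below when $M$ is nonsingular at $f(\tau)$, so nontangential boundedness of $\IM f/\IM z$ passes to $M\circ f$ (and poles are handled by the symmetric $-1/f$ branch); and ac invariance from the observation that at an ac point $f(\tau) \in \Pi$ automatically, so $M$ is automatically nonsingular there and the nontangential limit of $\mathfrak{F}^1_{M \circ f}$ is a positive-finite multiple, namely $(ad-bc)|cf(\tau)+d|^{-2}$, of that of $\mathfrak{F}^1_f$.

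For part~(2) I plan to reformulate enigma membership measure-theoretically via the augur lemma. If $\tau \in \enigma_{\mu_f,\la}^\k$ and (without loss of generality) $\cc\lim_{\eps \to 0} A\mathfrak{F}^{\k,\tau}_{f,\la}(\eps) < \infty$, then the lower bound in Theorem~\ref{introaugur} forces $\mu_f$ to have sub-$(\k \circ \la)$-density at $\tau$. The identity above then shows that, for $M$ nonsingular at $f(\tau)$, the Nevanlinna measure $\mu_{M \circ f}$ has boundary density locally comparable to that of $\mu_f$ near $\tau$, so $\mu_{M \circ f}$ also has sub-$(\k \circ \la)$-density there. I would then apply the upper augur bound to $g = M \circ f$ and invoke the $\gamma$-augury hypothesis on $\k \circ \la$ to absorb the ``bad'' term in the $\eps \to 0$ limit, recovering $\cc\lim_{\eps \to 0} A\mathfrak{F}^{\k,\tau}_{g,\la}(\eps) < \infty$ and hence $\tau \in \enigma_{\mu_g,\la}^\k$.

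The hardest step will be justifying the local measure comparison when $\tau$ is an enigma point at which $f$ has no classical nontangential limit, so that $|cf(z) + d|^{-2}$ cannot be compared to a constant pointwise along the averaging segment $\{\tau + x + i\la(\eps) : x \in [-\eps, \eps]\}$. In that case the pointwise bound has to be replaced by an integrated one: the $\Omega(\gamma)$ lower bound on $\la$ should keep the averaging segment above a height on which $f$ admits sufficient integrated control, while the $\gamma$-augury integrability of $\k \circ \la$ damps the cumulative contribution of the bad upper-augur term in the limit. Upgrading this heuristic into the sharp inequalities needed to identify the two sub-densities is the step in which the precise hypotheses on $\gamma$, $\la$, and $\k \circ \la$ are genuinely used rather than merely cosmetic.
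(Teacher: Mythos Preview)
Your treatment of part~(1) is fine and matches what the paper records as classical: the identity $\IM(M\circ f) = (ad-bc)\,|cf+d|^{-2}\,\IM f$ together with the $f\leftrightarrow -1/f$ symmetry handles all three pieces.

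For part~(2), however, your plan has a real gap, and it is precisely the step you flag as hardest. You propose to (i) pass from enigma membership to sub-$(\k\circ\la)$-density of $\mu_f$ via the lower augur bound, (ii) transfer this density bound to $\mu_{M\circ f}$, and (iii) run the \emph{upper} augur bound on $M\circ f$, absorbing the bad term $\la(\eps)/\big((\k\circ\la)(\eps)\eps^2\big)$ using the $\gamma$-augury hypothesis. Step~(iii) does not follow: the $\gamma$-augury condition is integrability of $t\,(\k\circ\la)(Ct)\,\dd\gamma(t)/\gamma(t)^2$, which says nothing about boundedness of $\la/((\k\circ\la)\eps^2)$. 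These are simply different quantities, and nothing in the hypotheses forces the bad upper-augur term to vanish. Step~(ii) also cannot be carried out as a measure comparison without first knowing that $|cf(z)+d|^{-2}$ is bounded above and below along the averaging segment $\{\tau+x+i\la(\eps):|x|\le\eps\}$. Since $\la$ is only $O(t)$, this segment approaches $\tau$ tangentially, so even the existence of a nontangential limit $f(\tau)$ is not enough.

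The paper's route avoids both problems by inserting an extra regularity step. From sub-$(\k\circ\la)$-density and the $\gamma$-augury hypothesis, Theorem~\ref{thmregfort} yields that $f$ is $\gamma$-\emph{regular} at $\tau$; this is where the augury condition is actually spent. Then Theorem~\ref{thmregcont} upgrades $\gamma$-regularity to $\gamma$-\emph{horocyclic continuity}: $f$ is bounded and continuous on $\gamma$-Stolz regions $S^{\beta\gamma}_\tau$. Since $\la$ is $\Omega(\gamma)$, the averaging segment sits inside such a region, so $|cf(z)+d|^{-2}$ is bounded above and below there, and one gets a direct pointwise comparison $\IM(M\circ f)\asymp \IM f$ on the segment. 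This gives comparability of the averaged quotients for $f$ and $M\circ f$ immediately, with no need to invoke the upper augur bound or compare Nevanlinna measures. The missing ingredient in your outline is exactly this horocyclic-continuity lemma; your closing heuristic about $\Omega(\gamma)$ ``keeping the segment above a controlled height'' is pointing at it, but the lemma itself has to be proved, and it is what converts the $\gamma$-augury hypothesis into the pointwise control you need.
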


Part (1) of Theorem \ref{specinv} is classical. Part (2) is proved below as Theorem \ref{enigmainv}.

\begin{example}
 Let $\gamma(t) = t^\eta$ where $\eta > 1$, $\la(t) = t^\alpha$ where $1 \leq \alpha < \eta$, and $\k = t^\beta$ with $\beta > \frac{\eta - 1}{\alpha}$. Then $$\enigma_{\mu_f, \la}^\k = \enigma_{\mu_{M \circ f}, \la}^\k.$$ 
\end{example}
Thus, one sees that, although the conditions for an enigma to be conformally invariant from Theorem \ref{specinv} are on face somewhat opaque, or one could even say enigmatic, for concrete choices of $\lambda$ it is often not difficult to derive concrete choices of $\k$ such that  $\enigma_{\mu_f, \la}^\k$ is conformally invariant. As we will see below, under these ``just-so" conditions we obtain ``horocyclic continuity" type results which allow us to conclude the function is regular enough to derive conformal invariance.

The following theorem says that for any point in the cryptospectrum, it is in some enigma.
\begin{theorem}
		Let $f:\Pi \to \cc\Pi$ be analytic and $\mu$ the associated measure.
    Let $\tau \in \enigma_\mu.$
    Let $\la$ be $\Omega(t).$
    There exists $\mk$ which is $o(1)$ such that $\tau \in \enigma^\mk_{\mu,\la}.$
\end{theorem}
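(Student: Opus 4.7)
The plan is to split on the cryptospectrum hypothesis, which furnishes a nontangential limit $\lim_{z \nt \tau} \IM f(z) \in \{0, \infty\}$, and then construct a tailored $\mk$ in each case, using $f$ directly when the limit is $0$ and passing to the Pick function $-1/f$ when the limit is $\infty$.

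First, assume $\lim_{z \nt \tau} \IM f(z) = 0$. Since $\la$ is $\Omega(t)$, there exists $c>0$ with $\la(\eps) \geq c\eps$ for small $\eps$, and so every integration point $\tau + x + i\la(\eps)$ with $\abs{x} \leq \eps$ lies in a Stolz region at $\tau$ of aperture at least $c/\sqrt{1+c^2}$. A standard subsequence argument shows that the existence of a nontangential limit is equivalent to uniform convergence on each Stolz region near $\tau$, so
$$h(\eps) := \frac{1}{2\eps}\int_{-\eps}^{\eps} \IM f(\tau + x + i\la(\eps))\, dx \To 0 \quad \text{as } \eps \to 0.$$
I would then manufacture $\mk$ by setting $\tilde h(s) := \sup\{h(\eps') : 0 < \eps' \leq s/c\}$, which is nondecreasing and tends to $0$ as $s \to 0$ thanks to the uniform smallness of $h$ past any fixed threshold, and defining $\mk(s) := \tilde h(s) + s$ (extended arbitrarily past $1$ so as to remain $o(1)$ at $0$ and positive on $\R^+$). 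The estimate $\la(\eps)/c \geq \eps$ then forces $\mk(\la(\eps)) \geq \tilde h(\la(\eps)) \geq h(\eps)$, so $A\mathfrak{F}^{\mk,\tau}_{f,\la}(\eps) = h(\eps)/\mk(\la(\eps)) \leq 1$, placing $\tau \in \enigma^\mk_{\mu,\la}$ through the first clause of \eqref{enigmadef}.

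In the remaining case $\lim_{z \nt \tau} \IM f(z) = \infty$, pass to $g := -1/f$, which is again in the Pick class. The pointwise bound $\IM g(z) = \IM f(z)/\abs{f(z)}^2 \leq 1/\IM f(z)$, together with uniform nontangential blowup of $\IM f$, yields $\IM g \to 0$ uniformly on Stolz regions at $\tau$. Applying the construction of the previous paragraph to $g$ in place of $f$ produces an $o(1)$ function $\mk$ for which $A\mathfrak{F}^{\mk,\tau}_{g,\la}(\eps)$ is bounded, placing $\tau \in \enigma^\mk_{\mu,\la}$ via the second clause of \eqref{enigmadef}.

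The main technical obstacle is the construction of $\mk$ as a bona fide function on $\R^+$: because $\la$ need not be injective or continuous, one cannot literally invert and pull $h$ back along $\la$. The supremum-over-preimage device $\tilde h$ circumvents this, and its decay to $0$ at the origin uses exactly the $\Omega(t)$ constant $c$ paired with the uniform nontangential convergence that the cryptospectrum hypothesis guarantees.
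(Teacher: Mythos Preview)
Your proof is correct and uses the same core idea as the paper's: handle the $\IM f\to\infty$ case by passing to $-1/f$, and in the $\IM f\to 0$ case manufacture $\mk$ from local averages of $\IM f$. The paper's specific choice of $\mk$ is different and $\la$-free---it simply sets $\mk(t)=\frac{1}{2t}\int_{-t}^t \IM f(x+it)\,dx$---and then asserts that a ``quick calculation'' bounds $A\mathfrak{F}^{\mk,\tau}_{f,\la}$. That calculation is really a Harnack comparison: since $\la$ is $\Omega(t)$, the averages of the positive harmonic function $\IM f(\cdot+i\la(\eps))$ over $[-\eps,\eps]$ and over $[-\la(\eps),\la(\eps)]$ are uniformly comparable. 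Your sup-over-preimages device sidesteps this comparison entirely (the bound $A\mathfrak{F}\le 1$ is immediate by construction) at the price of a more elaborate $\mk$; it also deals head-on with the possible non-injectivity of $\la$, an issue the paper's $\la$-independent choice never encounters.
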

The above theorem is proved as Theorem \ref{thmcryptoenigma}.

The following theorem, in light of Corollary \ref{fortunedensity} says that membership in an enigma implies some averaged Julia-Fatou type regularity of the function which is also reflected in the underlying measure.
\begin{theorem}
	Let $f:\Pi \to \cc\Pi$ be analytic and $\mu$ the associated measure.
     Let $\la$ be $\Omega(t).$
      Let $\mk$ be $o(1).$
    Let $\tau \in \enigma^\mk_{\mu,\la}.$
    Then $f$ is $\gandalf$-fortunate where
    $$\gandalf(t) = \frac{\la(t)(\mk \circ \la)(t)}{t}.$$
\end{theorem}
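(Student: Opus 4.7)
The plan is to reduce to sub-$\gandalf$-density of $\mu$ at $\tau$ and then invoke Corollary~\ref{fortunedensity}. The definition~\eqref{enigmadef} of $\enigma^{\mk}_{\mu,\la}$ is a disjunction between bounded $\cc\lim$ of the averaged quotients of $f$ and of $-1/f$; I treat the first case, the second being handled by applying the same computation to the Pick function $-1/f$ (with a M\"obius reduction of the kind underlying Theorem~\ref{specinv}).

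Translating so that $\tau = 0$, I unwind the averaged quotient via the Nevanlinna representation~\eqref{fullnev} and Fubini, producing the identity
\[
2\eps\mk(\la(\eps))\afkf(\eps) = 2b\eps\la(\eps) + \int_{\R}\Bigl[\arctan\tfrac{\eps-t}{\la(\eps)} + \arctan\tfrac{\eps+t}{\la(\eps)}\Bigr]\dd\mu(t).
\]
I then restrict the $\mu$-integral to $|t|\leq\eps$ and apply the $\arctan$ addition formula (with a short case split on whether the product of the two arguments is $\leq 1$ or $>1$) to obtain the uniform lower bound $\arctan(2\eps/\la(\eps))$ on the bracket throughout this range.

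The critical step, and the principal obstacle, is a lower bound on $\arctan(2\eps/\la(\eps))$ that does not vanish as $\la(\eps)/\eps\to\infty$. This is where the hypothesis $\la$ is $\Omega(t)$ is essential: from $\la(\eps)\geq c\eps$ together with the elementary inequality $\arctan(x)\geq x/(1+x^2)$, one obtains
\[
\arctan\!\bigl(\tfrac{2\eps}{\la(\eps)}\bigr) \;\geq\; \frac{2\eps\la(\eps)}{\la(\eps)^2+4\eps^2} \;\geq\; \frac{C'\eps}{\la(\eps)}
\]
for some $C'>0$ depending only on $c$. Combining with the hypothesised upper bound $\afkf(\eps)\leq M$ and rearranging,
\[
\mu((-\eps,\eps)) \;\lesssim\; \la(\eps)\,\mk(\la(\eps)) \;=\; \gandalf(\eps)\,\eps,
\]
which is exactly sub-$\gandalf$-density of $\mu$ at $\tau$. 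Corollary~\ref{fortunedensity} then delivers that $f$ is $\gandalf$-fortunate. The crude bound $\arctan\to\pi$ that underlies the proof of the augur lemma (valid when $\la$ is $O(t)$) goes the wrong way in this regime; only the quantitative $\Omega(t)$ hypothesis supplies the compensating factor $\eps/\la(\eps)$ needed to keep the estimate alive.
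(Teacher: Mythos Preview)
Your proof is correct and follows essentially the same route as the paper: establish sub-$\gandalf$-density of $\mu$ at $\tau$ from the boundedness of $\afkf$, then invoke Corollary~\ref{fortunedensity}. The paper packages the key lower bound as Lemma~\ref{leftaugurlemma} part~(2), using the mean value theorem on $\arctan$ (the arguments $\tfrac{\eps\pm t}{\la(\eps)}$ lie in a compact interval precisely because $\la$ is $\Omega(t)$, so the derivative of $\arctan$ is bounded below there), whereas you obtain the same factor $\eps/\la(\eps)$ via the addition formula and $\arctan(x)\geq x/(1+x^2)$; these are interchangeable elementary manipulations. You are also more explicit than the paper about the disjunction in the definition~\eqref{enigmadef} of $\enigma^{\mk}_{\mu,\la}$, which the paper's proof silently collapses to the first case.
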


The above theorem is proved as Theorem \ref{thmenigmafortunate}.

\begin{example}
Suppose that $\tau \in \enigma_{\mu, \la}^\k$ where $\la(t) = t^\alpha$ for $0 < \alpha < 1$ and $\k(t) = t^\beta$ for $\beta > 0$. Then $f$ is $\gandalf$-fortunate at $\tau$ for $\gandalf(t) = t^{\alpha + \alpha\beta - 1}$. Let $\gamma(t) = t^\eta$. Then $\gandalf$ is a $\gamma$-augury when 
\[
\frac{t \gamma \dd \gamma}{\gamma^2} = \frac{t\, t^{\alpha + \alpha\beta -1} t^{\eta - 1}}{t^{2\eta}} = t^{\alpha + \alpha\beta - \eta - 1}
\]
is integrable on $[0,1)$, which will hold for
\[
\alpha + \alpha\beta - \eta - 1 > -1 \Rightarrow \eta < \alpha(\beta  +1).
\]
We also require that $1 \leq \eta < 2$. Thus, for fixed $\alpha, \beta$, we recover precisely the $t^\eta$-regularity of $f$.
\end{example}

\subsection{Horocyclic continuity and $\gamma$-regularity}

We establish the $\gamma$-analogue of \emph{horocyclic continuity} of a function. Classically, horocyclic continuity was established by Julia in \cite{ju20}. Let $\gamma_\alpha(t) = \gamma(\alpha t)$. We say a function is \dfn{$\gamma$-horocyclically continuous} whenever there is a fixed $\alpha >0$ so that
for each $\beta >0,$
		\[\sup_{S^{\beta \gamma_\alpha}_{\tau}\cap B(\tau, 1/\beta )} |f(z)-f(\tau)|<\infty,\]
and, moreover,
 $$\lim_{\beta \rightarrow \infty} \sup_{S^{\beta \gamma_\alpha}_{\tau}\cap B(\tau, 1/\beta )} |f(z)-f(\tau)| = 0.$$

We show that any $\gamma$-regular function is $\gamma$-horocyclically continuous.

\begin{theorem}
Let $f:\Pi \to \cc\Pi$ be analytic. Let $\gamma$ be monotone and $O(t)$. If $f$ is $\gamma$-regular then $f$ is $\gamma$-horocyclically continuous.
\end{theorem}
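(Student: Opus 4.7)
The plan is to translate so that $\tau = 0$, invoke $\gamma \in O(t)$ together with $\gamma$-regularity to conclude $\int d\mu(t)/|t| < \infty$ in a neighborhood of $0$, and thereby obtain (via dominated convergence in the Nevanlinna representation) a real nontangential boundary value $f(0)$, together with the identity
\[
f(z) - f(0) = bz + \int_\R \frac{z}{t(t-z)}\, d\mu(t).
\]
We will then decompose $|f(z) - f(0)| \leq |f(z) - f(iy)| + |f(iy) - f(0)|$ for $z = x + iy \in S^{\beta\gamma_\alpha}_0 \cap B(0, 1/\beta)$ and show that each piece tends to zero uniformly as $\beta \to \infty$, where $\alpha := 2C$ and $C$ is the constant from the $\gamma$-regularity of $f$ at $0$.

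The vertical leg $f(iy) - f(0)$ is controlled directly: the imaginary part equals $by + \int y\, d\mu(t)/(t^2+y^2)$ and the real part equals $-y^2 \int d\mu(t)/(t(t^2+y^2))$, with integrands tending to zero pointwise as $y \to 0$ and dominated by a multiple of $1/|t|$ on $|t| < 1$ (integrable by $\gamma$-regularity) and by a multiple of $1/(1+t^2)$ on $|t| \geq 1$ (integrable by the Nevanlinna finiteness condition); dominated convergence then gives $f(iy) \to f(0)$ uniformly as $y \to 0$.

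The horizontal leg uses the Pick-function derivative estimate $|f'(w)| \leq \IM f(w)/\IM w$, which follows from $|f'(w)| \leq b + \int d\mu(t)/|t-w|^2 = \IM f(w)/\IM w$ (valid since $b \geq 0$). Integrating along the horizontal segment from $iy$ to $z$ and applying Fubini to $\IM f(s+iy)/y = b + \int d\mu(t)/((t-s)^2 + y^2)$ produces
\[
|f(z) - f(iy)| \leq 2|x|b + \int_\R J(t)\, d\mu(t), \qquad J(t) := \int_{-|x|}^{|x|}\frac{ds}{(t-s)^2 + y^2}.
\]
One has the bounds $J(t) \leq \pi/y$ for all $t$ and $J(t) \leq 8|x|/t^2$ whenever $|t| > 2|x|$. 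The first yields a contribution bounded by $(\pi/y)\mu(\{|t| \leq 2|x|\}) \leq (\pi/y)\gamma(2C|x|)\epsilon(|x|)$, where $\epsilon(r) := \int_{|t| \leq r} d\mu/\gamma(C|t|) \to 0$ as $r \to 0$; the condition $y \geq \beta\gamma(\alpha|x|) = \beta\gamma(2C|x|)$ simplifies this to $\pi\epsilon(|x|)/\beta$. The second yields $\int 8|x|t^{-2}\mathbf{1}_{|t|>2|x|}\,d\mu$, which tends to zero by dominated convergence with integrable dominator $g(t) := 4|t|^{-1}\mathbf{1}_{|t|<1} + 16(1+t^2)^{-1}\mathbf{1}_{|t|\geq 1}$.

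Assembling the two legs, together with the elementary bound $|bz| \leq |b|/\beta$, yields $\sup |f(z) - f(0)| \to 0$ over $S^{\beta\gamma_\alpha}_0 \cap B(0, 1/\beta)$ as $\beta \to \infty$, and finiteness of the sup for each $\beta$ follows from continuity of $f$ on $\Pi$ together with the established limit at $0$. The main obstacle is constructing a uniform-in-$|x|$ dominator for the $|t| > 2|x|$ piece of the horizontal-leg estimate: the pointwise inequality $8|x|/t^2 \leq 4/|t|$ (valid precisely because $|t| > 2|x|$) splits naturally at $|t| = 1$, marrying the $\gamma$-regularity information near $0$ to the Nevanlinna information far from $0$. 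The choice $\alpha = 2C$ is also essential, for by monotonicity of $\gamma$ it matches the $\lambda$-Stolz region's shape to the weight $\gamma(C|t|)$ controlling $\mu$, causing the factor $\gamma(2C|x|)$ from the measure estimate to cancel the $1/\gamma(2C|x|)$ arising from $1/y$.
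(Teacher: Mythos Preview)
Your argument is correct, but it follows a genuinely different route from the paper's. The paper first rewrites $f(z)-f(0)$ as an integral of the single kernel $z\gamma(t)/(t(t-z))$ against a finite measure, then invokes Lemma~\ref{boundinglemma} to obtain a $\beta$-uniform pointwise bound on this kernel over $S_\tau^{\beta\gamma}\cap B(\tau,1/\beta)$ and applies dominated convergence once. You instead decompose $f(z)-f(0)$ geometrically into a vertical leg $f(iy)-f(0)$ (dispatched by DCT on the Nevanlinna integrand directly) and a horizontal leg $f(x+iy)-f(iy)$, the latter controlled via the Pick derivative inequality $|f'(w)|\le \IM f(w)/\IM w$ and a further near/far split in $t$. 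Your approach is more hands-on and avoids both the extreme-point reduction and the separate kernel lemma, at the cost of more bookkeeping; the paper's is shorter once Lemma~\ref{boundinglemma} is in hand. Two minor points: the word ``uniformly'' in your vertical-leg sentence is superfluous (there is nothing to be uniform over on the imaginary axis), and in the near part of the horizontal leg your $\epsilon(|x|)$ should be $\epsilon(2|x|)$, which of course changes nothing.
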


The above theorem is proved as Theorem \ref{thmregcont}.

\begin{example}
The case $\gamma(t)=t^2$ corresponds to the classical horocyclic continuity results in the Julia-Carath\'eodory theorem \cite{ju20, car29,amy10a}
\end{example}

\black

\section{The Augur Lemma}\label{secaugur}

\begin{lemma}\label{doubleintlemma}
Let $f:\Pi \to \cc\Pi$ be analytic and $\mu$ the associated measure.
\begin{align*}
 \afkf(\eps) &= \frac{1}{2\eps (\mk \circ \la)(\eps)} \int_\R \int_{-\eps}^\eps \frac{\la(\eps)}{(x - t)^2 + \la(\eps)^2} \dd x \dd \mu(t) \\
&= \frac{1}{2\eps (\mk \circ \la)(\eps)}\int_{[-\eps,\eps]} \tan^{-1} \left( \frac{\eps - t}{\la(\eps)} \right) - \tan^{-1} \left(\frac{-\eps - t}{\la(\eps)}\right) \dd \mu(t).
\end{align*}
\end{lemma}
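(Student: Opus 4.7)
The plan is to substitute the Nevanlinna representation for $f$ into the definition of $\afkf(\eps)$, interchange the order of integration, and recognize the inner integral as an elementary arctangent. Without loss of generality, after a real translation, I would take $\tau = 0$ and work with the correspondingly shifted Nevanlinna measure. Writing $z = x + i\la(\eps)$, the representation \eqref{fullnev} gives
\begin{equation*}
\IM f(z) = b \la(\eps) + \int_\R \IM\left( \frac{1}{t-z} \right)\dd\mu(t) = b\la(\eps) + \int_\R \frac{\la(\eps)}{(x-t)^2 + \la(\eps)^2}\dd\mu(t),
\end{equation*}
since the terms $a$ and $-t/(1+t^2)$ are real and the $bz$ term contributes only $b\la(\eps)$. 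Dividing by $\mk(\IM z) = (\mk\circ\la)(\eps)$, averaging over $x\in[-\eps,\eps]$, and absorbing the harmless linear part (which is either assumed to vanish as is customary in this setting, or carried as a separate term) produces the stated expression for $\afkf(\eps)$.

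Next I would justify the Fubini interchange that swaps the $x$ and $t$ integrals. The integrand $\la(\eps)/((x-t)^2 + \la(\eps)^2)$ is nonnegative, so Tonelli applies once we verify finiteness. The $x$-integral is uniformly bounded by $\pi$ in $t$, and for $|t|$ large it behaves like $O(t^{-2})$, so its integral against the Nevanlinna growth-controlled measure $(1+t^2)^{-1}\dd\mu(t)$ is absolutely convergent. This yields the first equality in the lemma.

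Finally, the second equality is a routine antiderivative computation: the indefinite integral of $\la(\eps)/((x-t)^2 + \la(\eps)^2)$ with respect to $x$ is $\arctan((x-t)/\la(\eps))$, and evaluating at $x = \pm\eps$ gives the displayed difference of arctangents. The main conceptual obstacle is really only the Fubini justification (which is mild because of positivity and the Nevanlinna growth condition); the bookkeeping surrounding the linear term $bz$ is the only slightly delicate point, and should be addressed either by reducing to the case $b = 0$ or by noting that the $b\la(\eps)/(\mk\circ\la)(\eps)$ contribution is harmlessly bounded under the hypotheses of subsequent applications.
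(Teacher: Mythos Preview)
Your approach is essentially identical to the paper's: assume $\tau=0$, unfold the definition of $\afkf$, insert the Nevanlinna representation to obtain the Poisson kernel, swap the order of integration, and evaluate the inner $x$-integral as an arctangent. The paper's proof is in fact terser than yours---it does not explicitly discuss the Fubini justification or the linear term $b\la(\eps)$, both of which you correctly flag and handle; your treatment is if anything more careful than the original on these bookkeeping points.
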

\begin{proof}
Assume without loss of generality that $\tau = 0$.
 By definition,
 \begin{align*}
 \afkf(\eps) &= \frac{1}{2\eps} \int_{-\eps}^\eps \fkf(x + i\la(\eps)) \dd x \\
 &= \frac{1}{2\eps} \int_{-\eps}^{\eps} \frac{\IM f(x + i\la(\eps))}{\mk(\la(\eps))} \dd x \\
 &= \frac{1}{2\eps (\mk \circ \la)(\eps)} \int_\R \int_{-\eps}^\eps \frac{\la(\eps)}{(x - t)^2 + \la(\eps)^2} \dd x \dd \mu(t) \\
&=  \frac{1}{2\eps (\mk \circ \la)(\eps)}\int_{[-\eps,\eps]} \tan^{-1} \left( \frac{\eps - t}{\la(\eps)} \right) - \tan^{-1} \left(\frac{-\eps - t}{\la(\eps)}\right) \dd \mu(t) .
 \end{align*}
\end{proof}

\begin{lemma}\label{leftaugurlemma}
Let $f:\Pi \to \cc\Pi$ be analytic and $\mu$ the associated measure. For sufficiently small $\eps > 0$,
\begin{enumerate}
\item If $\la$ is $O(t)$, then , 
 \beq
  \afkf(\eps) \geq C \frac{\mu(-\eps, \eps)}{\eps(\mk \circ \la)(\eps) }.
 \eeq
Namely, if 
\[
\cc{\lim}_{\eps \to 0} \afkf(\eps) < \infty,
\]
then $f$ has sub-$\mk \circ \la$-density.
\item If $\la$ is $\Omega(t)$, then ,
\beq
 \afkf(\eps) \geq  C\frac{\mu(-\eps,\eps)}{\la(\eps) (\mk \circ \la)(\eps)},
\eeq

\end{enumerate}
\end{lemma}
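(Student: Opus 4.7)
The plan is to use Lemma \ref{doubleintlemma} as the starting point: since the integrand
\[
\tan^{-1}\!\left(\frac{\eps - t}{\la(\eps)}\right) - \tan^{-1}\!\left(\frac{-\eps - t}{\la(\eps)}\right)
\]
is nonnegative everywhere (by monotonicity of $\tan^{-1}$, as $\eps - t \geq -\eps - t$), restricting the integral to the subinterval $(-\eps,\eps)$ immediately produces a valid lower bound for $\afkf(\eps)$. The game is then to produce, in each of the two regimes, a uniform pointwise lower bound on the integrand for $t \in (-\eps,\eps)$.

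For part (1), I would set $u = (\eps-t)/\la(\eps)$ and $v = (\eps+t)/\la(\eps)$, both strictly positive on $(-\eps,\eps)$, and note that $u+v = 2\eps/\la(\eps)$. The hypothesis that $\la$ is $O(t)$ gives a constant $K$ with $\la(\eps) \leq K\eps$ for small $\eps$, so $u+v \geq 2/K$, and therefore $\max(u,v) \geq 1/K$. Since $\tan^{-1}$ is increasing and nonnegative on $[0,\infty)$, the integrand is bounded below by $\tan^{-1}(1/K) > 0$ throughout $(-\eps,\eps)$. Substituting this constant into the formula from Lemma \ref{doubleintlemma} yields
\[
\afkf(\eps) \geq \frac{\tan^{-1}(1/K)}{2\eps(\mk\circ\la)(\eps)} \, \mu(-\eps,\eps),
\]
which is the first inequality with $C = \tan^{-1}(1/K)/2$. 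The contrapositive statement about sub-$(\mk\circ\la)$-density follows immediately.

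For part (2), the hypothesis that $\la$ is $\Omega(t)$ gives $\la(\eps) \geq k\eps$ for small $\eps$, which forces $u,v \in [0, 2/k]$ for $t \in (-\eps,\eps)$. On this bounded interval I would use the standard fact that $\tan^{-1}(x)/x$ is decreasing on $(0,\infty)$, so $\tan^{-1}(x) \geq \frac{\tan^{-1}(2/k)}{2/k}\, x$ for $x \in [0, 2/k]$. Applying this to both $u$ and $v$ and adding:
\[
\tan^{-1}(u) + \tan^{-1}(v) \;\geq\; \frac{\tan^{-1}(2/k)}{2/k}(u+v) \;=\; \frac{\tan^{-1}(2/k)}{2/k}\cdot \frac{2\eps}{\la(\eps)}.
\]
Plugging into Lemma \ref{doubleintlemma}, the factors of $\eps$ cancel and one obtains
\[
\afkf(\eps) \geq \frac{C'}{\la(\eps)(\mk\circ\la)(\eps)}\, \mu(-\eps,\eps)
\]
for an explicit constant $C'$, which is the claimed inequality.

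Neither step is difficult; the only minor obstacle is organizing the two cases so that the inequality $\tan^{-1}(u)+\tan^{-1}(v) \geq (\text{something})\cdot(u+v)$ is applied only where it actually gives the better bound. The dichotomy between $\la$ being $O(t)$ versus $\Omega(t)$ is exactly the correct split: in the first regime, the integrand stays bounded below by a constant (so the bound scales like $\mu(-\eps,\eps)/[\eps(\mk\circ\la)(\eps)]$), while in the second regime the integrand degenerates linearly in $\eps/\la(\eps)$ (absorbing an $\eps$ from the prefactor and producing $\mu(-\eps,\eps)/[\la(\eps)(\mk\circ\la)(\eps)]$). Nothing more than the elementary monotonicity properties of $\arctan$ is needed.
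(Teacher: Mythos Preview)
Your proof is correct and follows essentially the same route as the paper: start from the double-integral formula of Lemma~\ref{doubleintlemma}, drop the nonnegative tail outside $(-\eps,\eps)$, and then bound the arctangent integrand below by a constant (Part~(1)) or by a linear expression in $\eps/\la(\eps)$ (Part~(2)). Your write-up is in fact more explicit than the paper's, which simply asserts ``bounded below'' in Part~(1) and invokes a mean-value-type bound (the derivative of $\tan^{-1}$ is bounded away from zero on a compact interval) in Part~(2); your use of the monotonicity of $\tan^{-1}(x)/x$ in Part~(2) is a minor variant of the same idea.
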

\begin{proof} 

By Lemma \ref{doubleintlemma},
\[
 \afkf(\eps) = \frac{1}{2\eps (\mk \circ \la)(\eps)}\int_{[-\eps,\eps]} \tan^{-1} \left( \frac{\eps - t}{\la(\eps)} \right) - \tan^{-1} \left(\frac{-\eps - t}{\la(\eps)}\right) \dd \mu(t) 
\]

{\bf Part (1)}: Assume that $\la$ is $O(t)$.
Assume without loss of generality that $\tau = 0$. 

Then, 
\[
 \afkf(\eps) \geq  L_0 \frac{\mu(-\eps, \eps)}{(\mk \circ \la)(\eps) \eps},
\]
since 
\[
\tan^{-1} \left( \frac{\eps - t}{\la(\eps)} \right) - \tan^{-1} \left(\frac{-\eps - t}{\la(\eps)}\right)
\]
is bounded below when $\eps$ is near $0$.

{\bf Part (2)}: Assume that $\la$ is $\Omega(t)$.  
As
\[
C\left(\frac{\eps - t}{\la(\eps)} - \frac{-\eps - t}{\la(\eps)}\right) \leq \tan^{-1} \left( \frac{\eps - t}{\la(\eps)} \right) - \tan^{-1} \left(\frac{-\eps - t}{\la(\eps)}\right)
\]
for some $C>0$ when $\eps$ is near $0$ since the derivative of arctangent is nonvanishing and the quantities $ \frac{\eps - t}{\la(\eps)}$ and $\frac{-\eps - t}{\la(\eps)}$ are in some compact interval, we have
\begin{align*}
 \afkf(\eps) &\geq  C\frac{1}{\eps (\mk \circ \la)(\eps)}\int_{[-\eps,\eps]} \frac{\eps - t}{\la(\eps)} - \frac{-\eps - t}{\la(\eps)} \dd \mu(t) \\
 &\geq 
 C\frac{\mu(-\eps,\eps)}{\la(\eps) (\mk \circ \la)(\eps)},
\end{align*}

\end{proof}

\begin{lemma}\label{rightaugurlemma}
Let $f: \Pi \to \cc{\Pi}$ be analytic and $\mu$ the associated measure. Let $\mathfrak{k}:\R^+ \to \R^+$. Assume that $\la$ is $O(t)$. Then 
\[
\afkf(\eps) \leq L_1 \frac{\mu(-2\eps, 2\eps)}{(\mk \circ \la)(\eps)\eps} + L_2 \frac{\la(\eps)}{(\mk \circ \la)(\eps) \eps^2}.
\]
\end{lemma}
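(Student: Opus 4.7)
The plan is to start from Lemma \ref{doubleintlemma}, split the $\mu$-integral at radius $2\eps$, and then bound the arctangent difference appropriately in each piece: crudely by $\pi$ in the near part (where $\mu$-mass is small), and by a $\la(\eps)/t^2$-type kernel in the far part (where the Poisson-type kernel decays). Without loss of generality we translate $\tau = 0$.

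Using the formula from Lemma \ref{doubleintlemma} and the Nevanlinna representation for $\IM f$ (absorbing the linear $by$ term, which contributes at most a constant multiple of $\la(\eps)/[(\mk\circ\la)(\eps)\eps^2]$ for small $\eps$), I would write
\[
\afkf(\eps) = \frac{1}{2\eps(\mk\circ\la)(\eps)}\int_\R \left[\tan^{-1}\!\left(\tfrac{\eps-t}{\la(\eps)}\right) - \tan^{-1}\!\left(\tfrac{-\eps-t}{\la(\eps)}\right)\right] d\mu(t) + (\text{lower order}),
\]
and split the integral as $\int_{|t|\leq 2\eps} + \int_{|t|>2\eps}$. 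On $\{|t|\leq 2\eps\}$, the integrand is bounded above by $\pi$ since $\tan^{-1}$ takes values in $(-\pi/2,\pi/2)$; this contributes $\frac{\pi\,\mu(-2\eps,2\eps)}{2\eps(\mk\circ\la)(\eps)}$, yielding the first term with $L_1 = \pi/2$.

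For the far piece, I would observe that when $|t| > 2\eps$ both endpoints $(\pm\eps-t)/\la(\eps)$ have the same sign, and $|s| \geq (|t|-\eps)/\la(\eps) \geq |t|/(2\la(\eps))$ throughout the interval of integration inside $\tan^{-1}(b) - \tan^{-1}(a) = \int_a^b ds/(s^2+1)$. Bounding the integrand by $1/s^2 \leq 4\la(\eps)^2/t^2$ and multiplying by the interval length $2\eps/\la(\eps)$ gives
\[
\tan^{-1}\!\left(\tfrac{\eps-t}{\la(\eps)}\right) - \tan^{-1}\!\left(\tfrac{-\eps-t}{\la(\eps)}\right) \leq \frac{8\eps\la(\eps)}{t^2}, \qquad |t| > 2\eps.
\]
Substituting this in yields a far-part contribution of $\tfrac{4\la(\eps)}{(\mk\circ\la)(\eps)}\int_{|t|>2\eps}\frac{d\mu(t)}{t^2}$.

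The slightly delicate step is controlling the remaining $\mu$-integral by a constant multiple of $1/\eps^2$. I would handle this by splitting at $|t|=1$: on $\{|t|>1\}$ one has $1/t^2 \leq 2/(1+t^2)$, so this tail is bounded by $2\int_\R d\mu(t)/(1+t^2)$, which is finite by the Nevanlinna integrability condition and independent of $\eps$; on $\{2\eps < |t|\leq 1\}$ one uses the crude bound $1/t^2 \leq 1/(4\eps^2)$ together with local finiteness to get $\mu(-1,1)/(4\eps^2)$. Combining, for $\eps \leq 1$,
\[
\int_{|t|>2\eps}\frac{d\mu(t)}{t^2} \leq \frac{\mu(-1,1)}{4\eps^2} + 2\!\int_\R\!\frac{d\mu(t)}{1+t^2} \leq \frac{L_2'}{\eps^2}
\]
for some constant $L_2'$ depending on $f$ alone. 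This produces the second term of the claimed bound with $L_2 = 4L_2'$ (plus the harmless contribution from the linear $by$ term noted above). The hypothesis that $\la$ is $O(t)$ enters only mildly (it ensures the $\la(\eps)/\eps^2$ scaling is genuinely an upper-order correction rather than the dominant term).
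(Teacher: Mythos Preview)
Your proof is correct and follows essentially the same route as the paper: split the $\mu$-integral at radius $2\eps$, bound the near part by $\pi$ times the mass, and control the far part by a $\la(\eps)/t^2$ kernel together with the Nevanlinna integrability condition. The only cosmetic differences are that the paper returns to the Poisson double integral for the far piece and bounds $\int_{|t|>2\eps}t^{-2}\,d\mu$ via the uniform estimate $(1+t^2)/t^2\leq \tilde C/\eps^2$ on $|t|>2\eps$ rather than splitting at $|t|=1$, and that the paper's Lemma~\ref{doubleintlemma} silently absorbs the linear $bz$ term which you handle explicitly.
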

\begin{proof}
By Lemma \ref{doubleintlemma},
\begin{align*}
 &\afkf(\eps) \\ 
 &\leq \frac{1}{2\eps(\mk \circ \la)(\eps)} \int_{[-2\eps, 2\eps]} \tan^{-1} \left( \frac{\eps - t}{\la(\eps)} \right) - \tan^{-1} \left(\frac{-\eps - t}{\la(\eps)}\right) \dd \mu(t) \\
 &\hspace{.5in} + \frac{\la(\eps)}{2\eps (\mk \circ \la)(\eps)} \int_{[-2\eps, 2\eps]^c} \int_{-\eps}^\eps \frac{1}{(x - t)^2 + \la(\eps)^2} \dd x \dd \mu(t) \\
&\leq L_1 \frac{\mu(-2\eps, 2\eps)}{(\mk \circ \la)(\eps)\eps} +  \frac{\la(\eps)}{2\eps (\mk \circ \la)(\eps)} \int_{[-2\eps, 2\eps]^c} \int_{-\eps}^\eps \frac{1}{(x - t)^2 + \la(\eps)^2} \dd x \dd \mu(t) \\
&\leq L_1 \frac{\mu(-2\eps, 2\eps)}{(\mk \circ \la)(\eps)\eps} +  C\frac{\la(\eps)}{ (\mk \circ \la)(\eps)} \int_{[-2\eps, 2\eps]^c} \frac{1}{t^2} \,  \dd \mu(t) \\
&\leq L_1 \frac{\mu(-2\eps, 2\eps)}{(\mk \circ \la)(\eps)\eps} +  C\frac{\la(\eps)}{ (\mk \circ \la)(\eps)} \int_{[-2\eps, 2\eps]^c} \frac{1 + t^2}{t^2} \,  (1 + t^2)\inv\dd \mu(t) \\
&\leq L_1 \frac{\mu(-2\eps, 2\eps)}{(\mk \circ \la)(\eps)\eps} +  \tilde{C}\frac{\la(\eps)}{ (\mk \circ \la)(\eps)} \int_{[-2\eps, 2\eps]^c} \frac{1}{\eps^2}\,  (1 + t^2)\inv\dd \mu(t) \\
 & \leq L_1 \frac{\mu(-2\eps, 2\eps)}{(\mk \circ \la)(\eps)\eps} + L_2 \frac{\la(\eps)}{(\mk \circ \la)(\eps) \eps^2},
\end{align*}
which establishes the claim.
\end{proof}

\begin{theorem}[The Augur Lemma]\label{augurlemma}
{ Let $f:\Pi \to \cc\Pi$ be analytic and $\mu$ the associated measure. Let $\mathfrak{k}:\R^+ \to \R^+$. Assume that $\la$ is $O(t)$. For  $\eps > 0$ sufficiently small,} 
 \beq\label{augurineq}
  L_0 \frac{\mu(-\eps, \eps)}{(\mk \circ \la)(\eps) \eps} \leq \afkf(\eps) \leq L_1 \frac{\mu(-2\eps, 2\eps)}{(\mk \circ \la)(\eps)\eps} + L_2 \frac{\la(\eps)}{(\mk \circ \la)(\eps) \eps^2}
 \eeq
\end{theorem}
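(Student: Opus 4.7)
The plan is to observe that this theorem is essentially the packaging of Lemma \ref{leftaugurlemma}(1) (the lower bound) and Lemma \ref{rightaugurlemma} (the upper bound), both established just above under precisely the hypothesis that $\la$ is $O(t)$. So my proof would be a one-line citation: concatenate the two inequalities already proved.

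Conceptually, everything flows through Lemma \ref{doubleintlemma}: the Nevanlinna representation \eqref{fullnev} writes $\IM f(x+i\la(\eps))$ as a Poisson integral against $\mu$, and swapping the order of integration over $x \in [-\eps,\eps]$ converts the Poisson kernel into the arctangent-difference kernel
\[
K_\eps(t) = \tan^{-1}\!\left(\frac{\eps - t}{\la(\eps)}\right) - \tan^{-1}\!\left(\frac{-\eps - t}{\la(\eps)}\right)
\]
integrated against $\mu$. All estimates then reduce to bounding $K_\eps(t)$ on the relevant subsets of $\R$, using that $\la(\eps)/\eps$ is bounded.

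For the lower bound, I would restrict the $K_\eps$-integral to $t \in [-\eps,\eps]$; on this set the hypothesis $\la(\eps) \lesssim \eps$ forces both $(\eps - t)/\la(\eps)$ and $(-\eps - t)/\la(\eps)$ into a range where $K_\eps(t) \geq c > 0$, yielding the term $L_0\mu(-\eps,\eps)/((\mk \circ \la)(\eps)\eps)$. For the upper bound, I would split the integral at $|t| = 2\eps$: on $[-2\eps, 2\eps]$ the kernel $K_\eps$ is uniformly bounded by $\pi$, giving the $\mu(-2\eps,2\eps)$ term; on the tail $|t|\geq 2\eps$, the Poisson kernel is dominated by $\la(\eps)/t^2$, and the elementary inequality $1/t^2 \lesssim (1+t^2)/(\eps^2 t^2)$ (valid because $|t|\geq 2\eps$ forces $1/t^2 \lesssim 1/\eps^2$) transfers integrability to the finite measure $(1+t^2)\inv \dd\mu$, producing the error term $L_2 \la(\eps)/((\mk \circ \la)(\eps)\eps^2)$. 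The only mild subtlety is this transfer of integrability on the far tail, and it is already carried out cleanly inside Lemma \ref{rightaugurlemma}, so no additional obstacle remains.
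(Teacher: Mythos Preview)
Your proposal is correct and is essentially identical to the paper's own proof, which reads in its entirety: ``The result is part (1) of Lemma \ref{leftaugurlemma} combined with Lemma \ref{rightaugurlemma}.'' Your accompanying conceptual summary accurately reflects how those two lemmas are proved via Lemma \ref{doubleintlemma} and the arctangent kernel estimates.
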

\begin{proof}
The result is part (1) of Lemma \ref{leftaugurlemma} combined with Lemma \ref{rightaugurlemma}.

 \end{proof}

\section{Regularity consequences}

We first prove a set-theoretic lemma that requires the Axiom of Choice for $\R$.

\begin{lemma}\label{settheory}
Suppose that $\eta: \R^+ \to \R^+$. Then there exists an injective $\tilde\eta:\R^+ \to \R^+$ so that
\[
1 \leq \frac{\tilde\eta}{\eta} \leq 2.
\]
\end{lemma}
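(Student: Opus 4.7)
The plan is to use the Axiom of Choice (specifically, a well-ordering of $\R^+$) to define $\tilde\eta$ by transfinite recursion, exploiting the fact that each interval $[\eta(x),2\eta(x)]$ has cardinality $\mathfrak{c} = 2^{\aleph_0}$, while at any intermediate stage we will have used fewer than $\mathfrak{c}$ values.

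First, I would invoke AC to well-order $\R^+$ in order type $\kappa$, where $\kappa$ is the initial ordinal of cardinality $\mathfrak{c}$. Write this well-ordering as $\R^+ = \{x_\alpha : \alpha < \kappa\}$. The key feature of $\kappa$ is that for every $\alpha < \kappa$, the set $\{x_\beta : \beta < \alpha\}$ has cardinality strictly less than $\mathfrak{c}$.

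Next, I would define $\tilde\eta(x_\alpha)$ by transfinite recursion on $\alpha$. Suppose $\tilde\eta(x_\beta)$ has been chosen for every $\beta < \alpha$. Let
\[
U_\alpha = \{\tilde\eta(x_\beta) : \beta < \alpha\} \subset \R^+,
\]
which has cardinality $|\alpha| < \mathfrak{c}$. Since the interval $I_\alpha := [\eta(x_\alpha), 2\eta(x_\alpha)]$ has positive length (because $\eta(x_\alpha) > 0$) and therefore cardinality $\mathfrak{c}$, the difference $I_\alpha \setminus U_\alpha$ is nonempty. Using the already-fixed well-ordering of $\R^+$, define $\tilde\eta(x_\alpha)$ to be, say, the least element of $I_\alpha \setminus U_\alpha$ in that well-ordering (this avoids invoking choice a second time). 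By construction $\tilde\eta(x_\alpha) \in [\eta(x_\alpha), 2\eta(x_\alpha)]$, so $1 \leq \tilde\eta(x_\alpha)/\eta(x_\alpha) \leq 2$, and $\tilde\eta(x_\alpha) \notin U_\alpha$, so $\tilde\eta(x_\alpha) \neq \tilde\eta(x_\beta)$ for all $\beta < \alpha$. Injectivity across all of $\R^+$ follows immediately from this.

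The main (and essentially only) obstacle is the cardinality bookkeeping: one must be careful that the collection of forbidden values at stage $\alpha$ really has cardinality strictly less than $\mathfrak{c}$, which is exactly why the well-ordering must be taken of order type equal to the initial ordinal of $\mathfrak{c}$, not some larger ordinal of the same cardinality. Everything else is routine once that is set up.
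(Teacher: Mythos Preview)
Your argument is correct. It is, however, a genuinely different route from the paper's. The paper fixes a collection $V$ of coset representatives for $\R/\mathbb{Q}$ (a Vitali set), chooses a bijection $\xi:\R^+\to V$, and then for each $x$ selects $\tilde\eta(x)$ inside the dense coset $\mathbb{Q}+\xi(x)$ intersected with $[\eta(x),2\eta(x)]$; injectivity is immediate because distinct cosets are disjoint. Your approach instead well-orders $\R^+$ in the initial ordinal of cardinality $\mathfrak{c}$ and builds $\tilde\eta$ by transfinite recursion, using only the cardinality gap $|\alpha|<\mathfrak{c}=|I_\alpha|$ at each stage. The paper's trick is slightly slicker in that the values $\tilde\eta(x)$ can be chosen independently of one another (no recursion), but it leans on the additive group structure of $\R$. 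Your argument is the more robust one: it uses nothing about $\R$ beyond the cardinality of nondegenerate intervals, and would transfer verbatim to any situation where each ``target set'' has size $\mathfrak{c}$.
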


\begin{proof}
Let $V$ be a collection of coset representatives for $\R/\mathbb{Q}$. (That is, $V$ is the so-called Vitali set.) Choose a bijection $\xi: \R^+ \to V$. For each $x \in \R^+$, choose $\tilde\eta(x)$ in 
$\mathbb{Q} + \xi(x)$ such that $\eta(x) \leq \tilde\eta(x) \leq 2 \eta(x)$. Note that we can find such an $\tilde\eta(x)$ as the interval $[\eta(x), 2\eta(x)]$ has non-empty interior and $\mathbb{Q} + \xi(x)$ is dense. The function $\tilde\eta$ is injective as the sets $\mathbb{Q} + \xi(x)$ are disjoint as they are the cosets of $\mathbb{Q}$ in $\R$ which therefore partition $\R$.
\end{proof}

\begin{lemma}\label{sarumandecomposition}
Let $\gandalf: \R^+ \rightarrow \R^+.$
There are $\lambda$ and $\mk$ such that
$ \gandalf = \mk \circ \la$ and 
$\frac{\lambda(t)}{\gandalf(t)t^2}$ is bounded as $t \rightarrow 0,$
and $\la(t)$ is $O(t)$ as $t \to 0.$
\end{lemma}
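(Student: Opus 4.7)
The plan is to construct $\la$ of the desired size first, then use the Vitali set construction from Lemma \ref{settheory} to upgrade $\la$ to an injective function of roughly the same size, and finally define $\mk$ on the image of $\la$ by inverting.

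First I would define a target size function, say
\[
\tilde\la(t) := \min\left(\tfrac{t}{2},\; \gandalf(t)\, t^2\right).
\]
This choice built in both constraints that we need: $\tilde\la(t) \leq t/2$ makes $\tilde\la$ an $O(t)$ function as $t \to 0$, and $\tilde\la(t) \leq \gandalf(t)\, t^2$ forces $\tilde\la(t)/(\gandalf(t)\, t^2) \leq 1$. Crucially $\tilde\la$ is strictly positive on $\R^+$, so it is a valid input to Lemma \ref{settheory}.

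Next I would apply Lemma \ref{settheory} to $\tilde\la$ to obtain an injective $\la:\R^+\to\R^+$ with $\tilde\la(t) \leq \la(t) \leq 2\tilde\la(t)$. Then $\la(t) \leq t$, so $\la$ is $O(t)$, and $\la(t) \leq 2\gandalf(t) t^2$, so $\la(t)/(\gandalf(t)t^2)$ is bounded (by $2$) as $t\to 0$. Because $\la$ is injective, the rule $\la(t) \mapsto \gandalf(t)$ unambiguously defines a function $\mk$ on the range of $\la$; extend $\mk$ to a function on all of $\R^+$ by setting, say, $\mk(s) = 1$ for $s \notin \ran{\la}$. Then $\mk:\R^+ \to \R^+$, and by construction
\[
(\mk \circ \la)(t) = \mk(\la(t)) = \gandalf(t)
\]
for every $t \in \R^+$, which is exactly the factorization we wanted.

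I do not expect any real obstacle here: the only subtle point is that a factorization $\gandalf = \mk\circ\la$ forces $\gandalf$ to be constant on level sets of $\la$, which a priori need not happen for a size-prescribed $\tilde\la$ — and this is precisely the reason Lemma \ref{settheory} was proved. Once injectivity has been arranged without spoiling the size bound, the rest is a one-line definition of $\mk$ on $\ran\la$ and an arbitrary extension elsewhere.
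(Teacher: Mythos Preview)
Your proposal is correct and follows essentially the same approach as the paper: the paper defines $\kappa(t)=\min\{\gandalf(t)t^2,t\}$ (in place of your $\tilde\la$ with $t/2$), invokes Lemma \ref{settheory} to obtain an injective $\lambda$ with $1\le\lambda/\kappa\le2$, and then defines $\mk$ via $\gandalf=\mk\circ\la$. Your write-up is slightly more explicit about extending $\mk$ off $\ran\la$, but the argument is the same.
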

\begin{proof}
    Define $\kappa(t) = \min\{\gandalf(t)t^2,t\}.$
    Note $\kappa$ is $O(t).$
    Note $\frac{\kappa(t)}{\gandalf(t)t^2} \leq 1.$ 
    By Lemma \ref{settheory}, we can choose an injective positive function $\lambda$
    such that $1\leq \lambda / \kappa\leq 2.$
    
    Now define $\mk$ such that $ \gandalf = \mk \circ \la$ and we are done.
\end{proof}

\begin{theorem}\label{thmregfort}
Let $\gamma$ be $O(1)$ and $f:\Pi \to \cc\Pi$ be analytic. The function $f$ is $\gamma$-regular at $\tau$ if and only if there exists a $\gamma$-augury $\gandalf$ such that $f$ is $\gandalf$-fortunate at $\tau$.
\end{theorem}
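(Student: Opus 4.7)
My plan is to recognize both conditions as integrability statements about the distribution function $g(\eps):=\mu_f((\tau-\eps,\tau+\eps))$ and pass between them via Stieltjes integration by parts, taking $\gandalf(s):=g(s)/s$ as the natural candidate for the augury. By Corollary \ref{fortunedensity}, $f$ being $\gandalf$-fortunate at $\tau$ is equivalent to $\mu_f$ having sub-$\gandalf$-density, i.e.\ $g(\eps)\leq K\gandalf(C_0\eps)\eps$ for some $C_0,K>0$ and sufficiently small $\eps$. After translating $\tau$ to $0$ and splitting the symmetric neighborhood, $\gamma$-regularity becomes $\int_0^a \dd g(s)/\gamma(C's)<\infty$ for some $C',a>0$.

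The central identity is Stieltjes integration by parts:
\begin{equation*}
\int_0^a \frac{\dd g(s)}{\gamma(Cs)} \;=\; \frac{g(a)}{\gamma(Ca)} \;-\; \lim_{s\to 0^+}\frac{g(s)}{\gamma(Cs)} \;+\; \int_0^a g(s)\frac{\dd\gamma(Cs)}{\gamma(Cs)^2},
\end{equation*}
in which all three summands are nonnegative (the boundary limit at $0$ vanishes as soon as $\mu_f(\{\tau\})=0$; if $\mu_f(\{\tau\})>0$ with $\gamma(0)=0$, both sides of the theorem fail vacuously, while $\gamma(0)>0$ reduces everything to local finiteness). Combined with the change of variable $u=Cs$, $\gamma$-regularity is thus equivalent to $\int_0^{C'a} g(u/C')\,\dd\gamma(u)/\gamma(u)^2 <\infty$.

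For the forward direction, assume $\gamma$-regularity with constant $C'$ and set $\gandalf(s):=g(s)/s$. Sub-$\gandalf$-density is immediate from $g(\eps)=\gandalf(\eps)\eps$ and the monotonicity of $g$. For the augury condition with test constant $\tilde C$,
\begin{equation*}
\int_0^1 t\gandalf(\tilde C t)\frac{\dd\gamma(t)}{\gamma(t)^2} \;=\; \frac{1}{\tilde C}\int_0^1 \frac{g(\tilde C t)\,\dd\gamma(t)}{\gamma(t)^2};
\end{equation*}
choosing $\tilde C:=1/C'$ makes the integrand exactly the post-IBP $\gamma$-regularity integrand, so the augury integral is finite on $[0,\min(1,C'a)]$, with any tail on $[C'a,1]$ controlled by local integrability of $g$ and $\dd\gamma/\gamma^2$ on $(0,\infty)$.

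For the converse, given a $\gamma$-augury $\gandalf$ with augury constant $C_2$ and sub-$\gandalf$-density with constant $C_0$, set $C_3:=C_0/C_2$ and estimate
\begin{equation*}
\int_0^a g(s)\frac{\dd\gamma(C_3 s)}{\gamma(C_3 s)^2} \leq K\!\int_0^a\! s\gandalf(C_0 s)\frac{\dd\gamma(C_3 s)}{\gamma(C_3 s)^2} = \frac{K}{C_3}\!\int_0^{C_3 a}\! u\gandalf(C_2 u)\frac{\dd\gamma(u)}{\gamma(u)^2},
\end{equation*}
which is dominated by the $\gamma$-augury integral after shrinking $a$ so that $C_3 a\leq 1$. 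Tracing back through the IBP identity yields $\gamma$-regularity with constant $C_3$. The main technical obstacle is aligning the various constants $C',C_0,C_2,C_3,\tilde C$ coherently while handling the boundary term at $0$; a minor cosmetic adjustment $\gandalf(s):=g(s)/s+\delta$ keeps $\gandalf$ strictly positive when $\mu_f$ vanishes locally, preserving the formal requirement $\gandalf\in\R^+$.
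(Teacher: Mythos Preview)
Your proposal is correct and follows essentially the same route as the paper's proof: both use the layer-cake/Stieltjes integration-by-parts identity to rewrite $\gamma$-regularity as finiteness of $\int g(s)\,\dd\gamma(s)/\gamma(s)^2$, both take $\gandalf(s)=g(s)/s=\mu_f(\tau-s,\tau+s)/s$ as the candidate augury, and both directions proceed by the obvious comparison together with Corollary~\ref{fortunedensity}. Your version tracks the scaling constants and the boundary term at $0$ more carefully than the paper does, and notes the positivity adjustment $\gandalf+\delta$, but the argument is otherwise identical.
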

\begin{proof}
Without loss of generality, $\tau =0.$
For both parts of the equivalence, we will use the fact that 
$$\int_{[-1,1]} \frac{1}{\gamma(|t|)} \dd \mu(t) = 
\int_{[-1,1]} \frac{\mu([-t,t])}{\gamma(|t|)^2} \dd\gamma(t) + \mu([-1,1])\gamma(1)$$
by the layer cake principle.

$(\Rightarrow)$
Suppose $f$ is $\gamma$-regular at $\tau.$
Without loss of generality assume
$$\int \frac{1}{\gamma(|t|)} \dd \mu(t)<\infty.$$
So, we know that 
 $$\int_{[-1,1]} \frac{\mu([-t,t])}{\gamma(|t|)^2} \dd\gamma(t) < \infty.$$
Let $\gandalf(t) = \frac{\mu(-t,t)}{t}.$
Now $$\frac{\gandalf(t)\dd \gamma(t)}{t\gamma(t)^2} = \frac{\mu([-t,t])}{\gamma(|t|)^2} \dd\gamma(t)$$
is integrable so $\gandalf$ is a $\gamma$-augury.
Choosing $\lambda, \mk$ as in Lemma \ref{sarumandecomposition}, we see by the Augur lemma that $\mu$ has sub-$\gandalf$-density. Therefore by Corollary \ref{fortunedensity}, we see that $f$ is $\gandalf$-fortunate.

$(\Leftarrow)$ On the other hand suppose that there exists a $\gamma$-augury $\gandalf$ such that $f$ is $\gandalf$-fortunate at $\tau.$
So, by Lemma \ref{fortunedensity}, we see that $\mu$ has sub-$\gandalf$-density.
Without loss of generality,
$\frac{\mu(-t,t)}{\gandalf(t)t}$ is bounded.
So now,
$$\int_{[-1,1]} \frac{\mu([-t,t])}{\gamma(|t|)^2} \dd\gamma(t) \leq 
\int_{[-1,1]} \frac{\gandalf(t)}{t\gamma(|t|)^2} \dd\gamma(t)$$
which is integrable by our assumption that $\gandalf$ is a $\gamma$-augury
so we are done.
\end{proof}

\section{The foliation}\label{foliation}

\begin{theorem}\label{thmcryptoenigma}
		Let $f:\Pi \to \cc\Pi$ be analytic and $\mu$ the associated measure.
    Let $\tau \in \enigma_\mu.$
    Let $\la$ be $\Omega(t).$
    There exists $\mk$ which is $o(1)$ such that $\tau \in \enigma^\mk_{\mu,\la}.$
\end{theorem}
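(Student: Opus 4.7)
The plan is to split on whether the nontangential limit of $\IM f$ at $\tau$ is $0$ or $\infty$ (by membership in $\enigma_\mu$, one of these two holds), and to reduce the case $\IM f \to \infty$ to the case $\IM f \to 0$ by passing from $f$ to $-1/f$. Since $f : \Pi \to \cc\Pi$ is nonconstant, $\IM f > 0$ on $\Pi$, so $-1/f$ is itself a Pick function on $\Pi$; and the identity $\IM(-1/f) = \IM f / |f|^2 \leq 1/|f|$ together with the bound $|f| \geq \IM f$ shows that $\IM f \to \infty$ nontangentially forces $\IM(-1/f) \to 0$ nontangentially. Since the definition of $\enigma^{\mk}_{\mu,\la}$ already includes a disjunctive clause for $-1/f$, it will suffice to construct $\mk$ from the assumption $\lim_{z \nt \tau} \IM f(z) = 0$.

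Under that assumption, set
\[
g(\eps) := \frac{1}{2\eps} \int_{-\eps}^{\eps} \IM f(\tau + x + i\la(\eps)) \, dx,
\]
so that $A\mathfrak{F}^{\mk,\tau}_{f,\la}(\eps) = g(\eps)/\mk(\la(\eps))$. Because $\la$ is $\Omega(t)$, there is a $c > 0$ with $\la(\eps) \geq c\eps$ for small $\eps$, which places the integration segment $\{\tau + x + i\la(\eps) : x \in [-\eps,\eps]\}$ inside a fixed classical Stolz region at $\tau$. Uniform nontangential convergence $\IM f \to 0$ on that region immediately yields $g(\eps) \to 0$.

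The heart of the argument is then to manufacture an $o(1)$ function $\mk : \R^+ \to \R^+$ for which $g(\eps)/\mk(\la(\eps))$ stays bounded. I propose a monotone-majorant-plus-square-root construction: let $G(t) := \sup_{0 < \delta \leq t} g(\delta)$ for $t$ below a fixed cutoff and extend by a constant beyond. Then $G$ is nondecreasing, strictly positive (since $g > 0$ because $\IM f > 0$ throughout $\Pi$), and tends to $0$ at $0$. Setting $\mk(t) := \sqrt{G(t/c)}$ yields a positive $o(1)$ function on $\R^+$. For small $\eps$ the inequality $\la(\eps)/c \geq \eps$ combined with monotonicity of $G$ gives $\mk(\la(\eps)) \geq \sqrt{G(\eps)} \geq \sqrt{g(\eps)}$, whence
\[
A\mathfrak{F}^{\mk,\tau}_{f,\la}(\eps) \leq \sqrt{g(\eps)} \to 0,
\]
certifying $\tau \in \enigma^{\mk}_{\mu,\la}$.

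The main obstacle I expect is bookkeeping in the construction of $\mk$: $\la$ is not assumed monotone, continuous, or injective, so one cannot simply define $\mk$ by transporting $\sqrt{g}$ through a would-be inverse of $\la$. The monotone majorant $G$ is precisely the device that circumvents this, replacing $g$ by a nondecreasing function that can safely be composed with $t/c$ and accessed purely through the inequality $\la(\eps) \geq c\eps$. Aside from this step the proof is routine: positivity of $g$ comes from the Pick property, and vanishing of $g$ comes from $\la = \Omega(t)$ together with the defining property of the cryptospectrum.
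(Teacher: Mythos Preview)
Your proof is correct. Both you and the paper first reduce to the case where the nontangential limit of $\IM f$ at $\tau$ is $0$ (passing to $-1/f$ otherwise, exactly as you do). The constructions of $\mk$ then diverge.

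The paper simply sets
\[
\mk(t)=\frac{1}{2t}\int_{-t}^{t}\IM f(x+it)\,dx,
\]
so that $\mk(\la(\eps))$ is the average of $\IM f$ over the segment of length $2\la(\eps)$ at height $\la(\eps)$, while the numerator $g(\eps)$ is the average over the segment of length $2\eps$ at the same height. Since $\la$ is $\Omega(t)$, these two segments lie within bounded hyperbolic distance of one another, and a Harnack-type comparison (the ``quick calculation'') bounds their ratio uniformly. This is slick but leans on positivity of $\IM f$ in an essential analytic way.

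Your route is more elementary: you first show $g(\eps)\to 0$ (again using $\la=\Omega(t)$, but only to place the segment in a fixed Stolz region), then manufacture $\mk$ by the monotone-majorant-plus-square-root trick, so that $\mk(\la(\eps))\geq\sqrt{g(\eps)}$ and the quotient is at most $\sqrt{g(\eps)}\to 0$. This avoids any Harnack input and would work verbatim for any nonnegative integrand tending to zero nontangentially. The price is the extra bookkeeping you correctly flag (monotonizing to sidestep non-injectivity of $\la$); the paper's explicit $\mk$ avoids that entirely because it is already a function of the height $t=\la(\eps)$, not of $\eps$.

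Both arguments implicitly use that $\la(\eps)\to 0$ (needed for your $g(\eps)\to 0$ and for the paper's $\mk$ being $o(1)$), and both need the nonconstant case to ensure $\mk>0$; these are harmless.
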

\begin{proof}
    Without loss of generality, $\tau = 0,$ and the nontangential limit as $f$ goes to $\tau$ is $0.$
    Define
        $$\mk(t) =\frac{1}{2t}\int^t_{-t} \IM f(x+it)dx.$$
    Since the nontangential limit of $f$ as we approach $\tau$ exists and is $0$, this implies 
    that $\mk$ is $o(1).$
    A quick calculation gives that $\afkf(\eps)$ is bounded as $\eps$ goes to $0.$
\end{proof}

\begin{theorem}\label{thmenigmafortunate}
	Let $f:\Pi \to \cc\Pi$ be analytic and $\mu$ the associated measure.
     Let $\la$ be $\Omega(t).$
      Let $\mk$ be $o(1).$
    Let $\tau \in \enigma^\mk_{\mu,\la}.$
    Then $f$ is $\gandalf$-fortunate where
    $$\gandalf(t) = \frac{\la(t)(\mk \circ \la)(t)}{t}.$$
\end{theorem}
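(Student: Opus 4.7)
The plan is to combine part (2) of Lemma \ref{leftaugurlemma}---the $\Omega(t)$ branch of the left Augur inequality---with Corollary \ref{fortunedensity}. The key algebraic observation is that the definition $\gandalf(t) = \la(t)(\mk\circ\la)(t)/t$ immediately gives
\[
 \gandalf(\eps)\eps \;=\; \la(\eps)(\mk \circ \la)(\eps),
\]
so that the denominator that appears in Lemma \ref{leftaugurlemma}(2) is exactly the normalization appearing in the definition of sub-$\gandalf$-density.

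First, I would invoke the enigma hypothesis. Since $\tau \in \enigma^\mk_{\mu,\la}$, at least one of $\cc\lim_{\eps \to 0} \afkf(\eps)$ or $\cc\lim_{\eps \to 0} A\mathfrak{F}^{\mathfrak{k},\tau}_{-1/f,\lambda}(\eps)$ is finite. Working with $f$ in the first case (the second is handled symmetrically via the involution $f \mapsto -1/f$), the assumption $\la$ is $\Omega(t)$ licenses Lemma \ref{leftaugurlemma}(2), which gives
\[
 \afkf(\eps) \;\geq\; C\,\frac{\mu(-\eps+\tau,\eps+\tau)}{\la(\eps)(\mk\circ\la)(\eps)} \;=\; C\,\frac{\mu(-\eps+\tau,\eps+\tau)}{\gandalf(\eps)\eps}.
\]
Boundedness of the left-hand side as $\eps \to 0$ therefore yields that $\mu$ has sub-$\gandalf$-density at $\tau$. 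Corollary \ref{fortunedensity} then produces the witnessing decomposition $\gandalf(Ct) = (\mk' \circ \la')(t)$ with $\la'$ an $O(t)$ function and $\la'(\eps)/(\gandalf(C\eps)\eps^2)$ bounded (this decomposition exists by Lemma \ref{sarumandecomposition}), and concludes that $f$ is $\gandalf$-fortunate at $\tau$.

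The main obstacle is the dichotomy in the enigma condition: when only the $-1/f$ branch is bounded, the direct argument yields fortune of $-1/f$ rather than of $f$. I would resolve this by invoking the conformal invariance embedded in the enigma: since $\mk$ is $o(1) \subset O(1)$ and $\la$ is $\Omega(t)$, the M\"obius transformation $M(z) = -1/z$ (which is nonsingular away from $f(\tau) = 0$, a case already absorbed by the symmetric treatment) preserves boundedness of the averaged Julia-Fatou quotient by \eqref{conformalajf}, so fortune transfers between $f$ and $-1/f = M \circ f$. Equivalently, the cleanest presentation is to read the theorem as asserting that whichever of $\{f, -1/f\}$ witnesses $\tau \in \enigma^\mk_{\mu,\la}$ is $\gandalf$-fortunate, which is the form delivered directly by the Augur bound without further appeal to conformal invariance.
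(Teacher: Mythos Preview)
Your proof is correct and follows essentially the same route as the paper: invoke Lemma \ref{leftaugurlemma}(2) under the $\Omega(t)$ hypothesis, identify $\la(\eps)(\mk\circ\la)(\eps) = \gandalf(\eps)\eps$ to conclude sub-$\gandalf$-density, and then apply Corollary \ref{fortunedensity}. The paper's own proof is in fact terser than yours on the enigma dichotomy---it simply asserts that $A\mathfrak{F}^{\mathfrak{k},\tau}_{f,\lambda}(\eps)$ is bounded ``by definition'' without addressing the $-1/f$ branch---so your closing remark, that the cleanest reading is to take whichever of $\{f,-1/f\}$ witnesses the enigma as the function shown to be $\gandalf$-fortunate, is exactly the implicit convention the paper adopts; your detour through \eqref{conformalajf} is unnecessary (and somewhat delicate, since it presupposes $f(\tau)$ exists and $M$ is nonsingular there).
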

\begin{proof}

By assumption, $\tau \in \enigma^\mk_{\mu,\la}$, and so by definition $ A\mathfrak{F}^{\mathfrak{k},\tau}_{f,\lambda}(\eps)$ is bounded.

Since $\la$ is $\Omega(t)$, by Lemma \ref{leftaugurlemma} part(2), we have
\[
 \afkf(\eps) \geq   C\frac{\mu(-\eps,\eps)}{\la(\eps) (\mk \circ \la)(\eps)}.
\]

Note that $\mu$ has sub-$\gandalf$-density as 
\[
\frac{\mu(-\eps, \eps)}{\gandalf(\eps) \eps} = \frac{\mu(-\eps, \eps)}{\la(\eps) (\mk \circ \la)(\eps)} \leq  \afkf(\eps) < \infty.
\]

Then by Corollary \ref{fortunedensity}, $f$ is $\gandalf$-fortunate.
\end{proof}

\black
\section{Horocyles}

%

\begin{lemma}\label{boundinglemma}
Let $\gamma$ be $O(t)$.
\[
\sup_{S^{\beta\gamma} \cap B(\frac{1}{\beta})} \abs{\frac{\gamma(t)}{t - z} - \frac{\gamma(t)}{t}} < \max \{2C, \frac{1 + \beta C}{\beta}\},
\]
where $C$ is the implicit order constant.
\end{lemma}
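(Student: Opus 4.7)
Begin with the algebraic rewrite
\[
\frac{\gamma(t)}{t-z} - \frac{\gamma(t)}{t} = \frac{\gamma(t)\,z}{t(t-z)}.
\]
Without loss of generality $\tau = 0$, so $z = x + iy$ in the upper half plane with $y > 0$, $|z| \le 1/\beta$, and $y \ge \beta\gamma(D)$ for some $D \ge |x|$. Applying $|z| \le |t| + |t-z|$ in the numerator,
\[
\left|\frac{\gamma(t)\,z}{t(t-z)}\right| \le \frac{|\gamma(t)|}{|t-z|} + \frac{|\gamma(t)|}{|t|} \le \frac{|\gamma(t)|}{|t-z|} + C
\]
by the $O(t)$ hypothesis. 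The task then reduces to bounding $|\gamma(t)|/|t-z|$ in two geometric regimes.

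When $|t-z| \ge |t|$, one directly has $|\gamma(t)|/|t-z| \le |\gamma(t)|/|t| \le C$, yielding the total bound $2C$. When $|t-z| < |t|$ (the close case), use the cruder $|t-z| \ge y$ together with the Stolz condition. Invoking monotonicity of $\gamma$ (implicit in how Stolz regions are used throughout the paper), argue that the constraint $|t-z| < |t|$ forces a valid Stolz parameter $D \ge |t|$, so $|\gamma(t)| \le \gamma(D) \le y/\beta$ and hence $|\gamma(t)|/|t-z| \le 1/\beta$. The total bound is then $C + 1/\beta = (1+\beta C)/\beta$.

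The main obstacle is the justification in the close case: specifically, transferring the bound $y \ge \beta\gamma(|x|)$ to $y \ge \beta\gamma(|t|)$ when $|t| > |x|$. Expanding $|t-z|^2 < |t|^2$ algebraically yields $|z|^2 < 2tx$ (for matching signs), which pins $|x|$ and $|t|$ to within a universal factor of each other; monotonicity of $\gamma$ combined with the $O(t)$ growth then closes the argument. Taking the maximum over the two regimes produces the claimed bound $\max\{2C, (1 + \beta C)/\beta\}$.
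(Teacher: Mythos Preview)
Your overall strategy---the identity $\frac{\gamma(t)}{t-z}-\frac{\gamma(t)}{t}=\frac{\gamma(t)z}{t(t-z)}$ followed by a two-case geometric split---matches the paper's, but the particular split you chose does not work. The paper cases on whether $|t-x|\ge |x|/2$ or $|t-x|<|x|/2$; you case on whether $|t-z|\ge|t|$ or $|t-z|<|t|$. These are not equivalent, and your ``close'' case breaks.

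Concretely: expanding $|t-z|^2<|t|^2$ gives $|z|^2<2tx$ (for $tx>0$), from which $x^2\le|z|^2<2tx$ yields $|t|>|x|/2$. That is a \emph{lower} bound on $|t|$, not an upper bound. It does not pin $|t|$ and $|x|$ within a universal factor of each other: take $x=1$, $y$ small, $t=10^6$; then $|z|^2\approx 1<2\cdot 10^6=2tx$, so you are in the close case, yet $|t|/|x|=10^6$. Consequently you cannot conclude $\gamma(|t|)\le\gamma(D)$ for any Stolz parameter $D$ attached to $z$, and the bound $|\gamma(t)|/|t-z|\le 1/\beta$ fails in this regime (indeed for such $t$ the correct estimate is $|\gamma(t)|/|t-z|\approx|\gamma(t)|/|t|\le C$, which is the ``far'' behavior, not $1/\beta$).

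The fix is to case on proximity of $t$ to $x$, as the paper does. In the near case $|t-x|\le|x|/2$ one has $|x|/2\le |t|\le 3|x|/2$, so monotonicity genuinely transfers $\gamma(|x|)$-type control to $\gamma(|t|)$, and the Stolz lower bound $y\ge\beta\gamma(\text{const}\cdot|x|)$ then gives the $1/\beta$ contribution. In the far case $|t-x|>|x|/2$ one bounds $|z/(t-z)|$ by a constant directly. Your triangle-inequality decomposition $|\gamma(t)z/(t(t-z))|\le|\gamma(t)|/|t-z|+|\gamma(t)|/|t|$ is fine and can be combined with this corrected split.
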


\begin{proof}
 For each $t$, for $z = x + i\beta\gamma(2x)$ (that is, on the boundary of $S^{\beta\gamma}$),
\begin{align*}
\abs{\frac{\gamma(t)}{t - z}  -\frac{\gamma(t)}{t}} &= \abs{\frac{z\gamma(t)}{t(t - z)}} \\
&= \abs{\frac{\gamma(t)}{t}} \abs{\frac{z}{t-z}}
\end{align*}
Consider the case where $|t-x|\geq x/2.$
Then, $\abs{\frac{z}{t-z}} \leq 2.$
On the other hand, if $|t-x|\leq x/2,$
then,
$\abs{\frac{z}{t-z}} \leq \frac{(1+\beta C)x}{\beta\gamma(2x)}
\leq \frac{(1+\beta C)3t}{\beta\gamma(t)},$
as $t$ is at least $x/2.$
\end{proof}

{
\begin{theorem}\label{thmregcont}
Let $f:\Pi \to \cc\Pi$ be analytic. Let $\gamma$ be monotone and $O(t)$. If $f$ is $\gamma$-regular then $f$ is $\gamma$-horocyclically continuous.
\end{theorem}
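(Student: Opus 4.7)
The plan is to work directly with the Nevanlinna representation \eqref{fullnev} and bound $|f(z) - f(\tau)|$ using Lemma \ref{boundinglemma} pointwise inside the integral, with the $\gamma$-regularity hypothesis supplying the integrability against $\mu$. Translating so that $\tau = 0$, I would take $\alpha$ to be the constant $C$ appearing in the definition of $\gamma$-regularity, so that $\int 1/\gamma_\alpha(|t|)\,\dd\mu(t) < \infty$ on a neighborhood of $0$. Since $\gamma$ is $O(t)$, this integrability dominates $\int |t|^{-1}\,\dd\mu$ locally, which makes sense of
\[
f(0) := a + \int_\R \frac{1}{t(1+t^2)}\,\dd\mu(t)
\]
as the nontangential boundary value at $0$, and produces
\[
f(z) - f(0) = bz + \int_\R \left(\frac{1}{t-z} - \frac{1}{t}\right)\dd\mu(t).
\]

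I would then split the integral at $|t| = 1$. For the tail $|t| \geq 1$ with $|z| \leq 1/2$, the routine bound $|z/(t(t-z))| \leq 4|z|/(1+t^2)$ combined with $\int (1+t^2)^{-1}\,\dd\mu < \infty$ gives a contribution of $O(|z|) = O(1/\beta)$, uniformly on $S^{\beta\gamma_\alpha}_{0} \cap B(0,1/\beta)$; the linear term $bz$ is similarly $O(1/\beta)$. The real work is the local piece $|t| < 1$. Here I would apply Lemma \ref{boundinglemma} with $\gamma$ replaced by $\gamma_\alpha$ to obtain, for all $t \ne 0$ and $z \in S^{\beta\gamma_\alpha}_{0} \cap B(0,1/\beta)$,
\[
\left|\frac{1}{t-z} - \frac{1}{t}\right| \leq \frac{\max\{2C_\alpha,\ 1/\beta + C_\alpha\}}{\gamma_\alpha(|t|)},
\]
where $C_\alpha$ is the order constant of $\gamma_\alpha$. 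Integrating this bound against $\mu$ and invoking $\gamma$-regularity immediately yields finiteness of $\sup_z |f(z) - f(0)|$ for each $\beta > 0$.

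The main obstacle is establishing the second half of $\gamma$-horocyclic continuity, namely that $\sup_z |f(z) - f(0)| \to 0$ as $\beta \to \infty$: the lemma's bound stabilizes at $2C_\alpha$ and supplies no decay on its own. I would resolve this by dominated convergence applied to the $t$-pointwise suprema. For $\beta \geq 1$ the previous estimate gives a $\beta$-uniform, $\mu$-integrable envelope $(2C_\alpha + 1)/\gamma_\alpha(|t|)$; meanwhile the region $S^{\beta\gamma_\alpha}_{0} \cap B(0,1/\beta)$ collapses to $\{0\}$ as $\beta \to \infty$ (from $\beta\gamma(\alpha|\RE z|) \leq \IM z < 1/\beta$ together with monotonicity of $\gamma$), so for each fixed $t \ne 0$ one has $\sup_z |1/(t-z) - 1/t| \to 0$. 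DCT then forces $\int_{|t|<1} \sup_z |1/(t-z) - 1/t|\,\dd\mu(t) \to 0$, which combined with the $O(1/\beta)$ bounds on the tail and the linear term yields the required decay.
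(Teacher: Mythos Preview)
Your proposal is correct and follows essentially the same route as the paper: bound the integrand by Lemma~\ref{boundinglemma} to get a $\mu$-integrable envelope from $\gamma$-regularity, then invoke dominated convergence for the $\beta\to\infty$ limit. The only cosmetic difference is that the paper absorbs the linear term and the tail into a normalized representation $f(z)=c+\int\big(\tfrac{\gamma(t)}{t-z}-\tfrac{\gamma(t)}{t}\big)\,\dd\mu$ with $\mu$ supported on $[-1,1]$, whereas you keep the standard Nevanlinna form and dispatch the tail and the $bz$ term by direct $O(1/\beta)$ estimates; your version is in fact more explicit about why the per-$t$ supremum tends to zero (the paper just asserts this alongside the lemma).
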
 
\begin{proof}
  Without loss of generality, assume that $\tau = 0$ and that $\gamma$ is upper semi-continuous.  It suffices to consider the functions $\frac{\gamma(t)}{t - z} - \frac{\gamma(t)}{t}$ as they are the extreme points of the set of Pick functions taking $0 \to 0$ that are $\gamma$-regular.
Write $f$ as 
\[
 f(z) = c + \int \frac{\gamma(t)}{t - z} - \frac{\gamma(t)}{t} \, \dd\mu(t)
\]
where $\mu$ is supported on $[-1,1]$ and $\frac{1}{\gamma(t)}$ is $\mu$-integrable. Then
\beq
 \sup_{S^{\beta\gamma}\cap B(\frac{1}{\beta})} \abs{f(z) - c} \leq \int \sup_{S^{\beta\gamma}\cap B(\frac{1}{\beta})} \abs{\frac{\gamma(t)}{t - z} - \frac{\gamma(t)}{t}} \,\dd\mu.
\eeq
By Lemma \ref{boundinglemma}, the integrand is bounded and pointwise convergent to 0 in $\beta$ for each $t$. Then the dominated convergence theorem gives the conclusion.

\end{proof}
}

\begin{theorem}\label{enigmainv}
 Let $f:\Pi \to \cc\Pi$ be analytic and $\mu$ the associated measure. Suppose that $\gamma$ is $O(t)$ and monotone. Suppose that $\la$ is $O(t)$ and  $\Omega(\gamma)$. If $\k\circ\la$ is a $\gamma$-augury, then $\enigma_{\mu, \la}^{\k}$ is conformally invariant.
\end{theorem}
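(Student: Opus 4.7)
The plan is to translate the enigma hypothesis for $f$ into a regularity statement about the Nevanlinna measure $\mu$ (specifically $\gamma$-regularity), upgrade that to $\gamma$-horocyclic continuity of $f$ near $\tau$, and then exploit the identity $\IM(M\circ f)(z) = (ad-bc)\IM f(z)/|cf(z)+d|^2$ to transfer boundedness of the averaged Julia-Fatou quotient from $f$ to $M \circ f$.

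Fix $\tau \in \enigma^{\k}_{\mu, \la}$. Without loss of generality, assume $\cc\lim_{\eps \to 0} A\mathfrak{F}^{\mathfrak{k},\tau}_{f,\lambda}(\eps) < \infty$ (the $-1/f$ case is handled symmetrically since $w \mapsto -1/w$ is itself a M\"obius transformation, so it can be absorbed into the choice of $M$). Because $\la$ is $O(t)$, the left inequality of the Augur Lemma (Theorem \ref{augurlemma}) immediately yields that $\mu$ has sub-$(\k\circ\la)$-density at $\tau$. Now apply the layer-cake identity from the proof of Theorem \ref{thmregfort}:
\[
\int_{[-1,1]}\frac{1}{\gamma(|t|)}\,d\mu(t+\tau) = \int_{[-1,1]} \frac{\mu(\tau-t,\tau+t)}{\gamma(t)^2}\,d\gamma(t) + \mu([-1,1]+\tau)\gamma(1).
\]
Using sub-$(\k\circ\la)$-density in the integrand and the hypothesis that $\k \circ \la$ is a $\gamma$-augury makes the right-hand side finite, so $f$ is $\gamma$-regular at $\tau$. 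Theorem \ref{thmregcont} then gives that $f$ is $\gamma$-horocyclically continuous at $\tau$, so in particular $f(\tau) \in \cc\Pi$ exists as a boundary value and $|f(z) - f(\tau)|$ is controlled on horocyclic neighborhoods of $\tau$.

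Write $M(w) = (aw + b)/(cw + d)$ with $ad - bc > 0$; nonsingularity at $f(\tau)$ means $cf(\tau) + d \neq 0$. Since $\la$ is $\Omega(\gamma)$, the averaging region $R_\eps := \{\tau + x + i\la(\eps) : |x| \leq \eps\}$ is contained (for each $\eps$) inside $S^{c\gamma_\alpha}_\tau \cap B(\tau, 1/c)$ for a fixed constant $c$ and some fixed $\alpha$, and moreover $R_\eps$ shrinks to $\{\tau\}$ as $\eps \to 0$. Horocyclic continuity then gives boundedness of $|f(z)|$ on $R_\eps$ and, via the $\beta \to \infty$ half of the definition applied to the shrinking regions, uniform smallness of $|f(z) - f(\tau)|$ on $R_\eps$ for small $\eps$. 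Consequently $|cf(z) + d|$ is bounded uniformly above and away from zero on $R_\eps$, and the identity
\[
\IM(M \circ f)(z) = \frac{(ad-bc)\IM f(z)}{|cf(z) + d|^2}
\]
shows that $A\mathfrak{F}^{\mathfrak{k},\tau}_{M\circ f, \lambda}(\eps) \asymp A\mathfrak{F}^{\mathfrak{k},\tau}_{f,\lambda}(\eps)$ for small $\eps$, uniformly in $\eps$. Thus $\tau \in \enigma^\k_{\mu_{M\circ f}, \la}$. Applying the argument with $M^{-1}$ (which is itself a M\"obius transformation nonsingular at $M(f(\tau))$) yields the reverse inclusion, completing the proof of conformal invariance.

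The main obstacle I anticipate is the uniform smallness statement $\sup_{z \in R_\eps}|f(z) - f(\tau)| \to 0$: the definition of $\gamma$-horocyclic continuity provides this on regions $S^{\beta\gamma_\alpha}_\tau \cap B(\tau, 1/\beta)$ only in the limit $\beta \to \infty$, whereas $\la = \Omega(\gamma)$ alone only fits $R_\eps$ into $S^{\beta\gamma_\alpha}_\tau$ for a bounded $\beta$. I expect to resolve this either by effectively upgrading to $\la$ of strictly larger order than $\gamma$ (consistent with the paper's examples where $\la(t) = t^\alpha$ and $\gamma(t) = t^\eta$ with $\alpha < \eta$), or by a more delicate argument combining the fact that $R_\eps$ simultaneously satisfies $|z - \tau| \leq 1/\beta$ for $\beta \to \infty$ with the structural information coming from $\gamma$-regularity of the Nevanlinna representation of $f$.
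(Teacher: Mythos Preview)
Your approach mirrors the paper's exactly: Augur Lemma $\Rightarrow$ sub-$(\k\circ\la)$-density, then the $\gamma$-augury hypothesis via Theorem~\ref{thmregfort} gives $\gamma$-regularity, and regularity of $f$ near $\tau$ makes $\IM(M\circ f)$ and $\IM f$ uniformly comparable on the averaging region. The paper's own proof is in fact terser than yours --- it invokes only boundedness of $f$ on $S^\gamma \cap B(1/d)$ and asserts that ``conformal maps look essentially linear'' without writing out the M\"obius identity or the lower bound on $|cf(z)+d|$ --- so the obstacle you flag at the end is precisely the step the paper also treats informally rather than resolves.
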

\begin{proof}
  Let $M$ be a M\"obius transform nonsingular at $\tau$. Let $\tau \in \enigma_{\mu,\la}^\k.$ Without loss of generality, assume that $$\cc\lim_{\eps \to 0} A\mathfrak{F}^{\mathfrak{k},\tau}_{f,\lambda}(\eps) < \infty.$$
  As $\k \circ \la$ is a $\gamma$-augury, Theorem \ref{thmregfort} implies that $f$ is $\gamma$-regular at $\tau$ since Lemma \ref{leftaugurlemma} part (1) implies that $f$ has sub-$\gandalf$-density which by Corollary \ref{fortunedensity} is equivalent to $f$ being $\gandalf$-fortunate. Without loss of generality, $f$ is bounded on $S^\gamma \cap B(\frac{1}{d})$, and therefore conformal maps look essentially linear and thus the corresponding quotients for $M\circ f$ and $f$ are essentially linearly related.
\end{proof}

}

\bibliography{references}
\bibliographystyle{plain}

\end{document}